\documentclass[11pt,reqno]{amsart}
\usepackage[T1]{fontenc}
\usepackage{amsmath,amssymb,mathtools,booktabs,enumitem}
\usepackage[expansion=false]{microtype}
\usepackage{cite}
\usepackage[hidelinks]{hyperref}
\numberwithin{equation}{section}
\theoremstyle{plain}
\newtheorem{theorem}{Theorem}[section]
\newtheorem{lemma}[theorem]{Lemma}
\newtheorem{proposition}[theorem]{Proposition}
\newtheorem{corollary}[theorem]{Corollary}
\theoremstyle{definition}
\newtheorem{definition}[theorem]{Definition}
\newtheorem{remark}[theorem]{Remark}
\newcommand{\MI}{\mathsf{MI}}
\newcommand{\IB}{\mathsf{IB}}
\renewcommand{\AA}{\mathsf{A}}
\newcommand{\RR}{\mathsf{R}}
\newcommand{\JJ}{\mathsf{J}}
\newcommand{\BB}{\mathsf{B}}
\newcommand{\RB}{\mathsf{RB}}
\newcommand{\BR}{\mathsf{BR}}
\newcommand{\KK}{\mathsf{K}_0}
\newcommand{\TT}{\mathbf{T}}
\newcommand{\Id}{\operatorname{Id}}
\newcommand{\Var}{\operatorname{Var}}
\newcommand{\Mod}{\operatorname{Mod}}
\newcommand{\ct}{\operatorname{c}}
\newcommand{\first}{\ell}
\newcommand{\last}{r}
\newcommand{\rev}{\operatorname{rev}}

\newcommand{\sr}{\mathrm{SR}}
\newcommand{\G}{\mathcal{G}}
\newcommand{\Dset}{\mathcal{D}}
\newcommand{\Iso}{\mathcal{I}}

\hypersetup{pdftitle={The Finite Basis Problem for Multiplicatively Idempotent Semirings},pdfauthor={Lili Wang, Aifa Wang}}

\title[Multiplicatively idempotent semirings]{The Finite Basis Problem for Multiplicatively Idempotent Semirings}
\author[A. Wang]{Aifa Wang}
\address{School of Mathematical Sciences, Chongqing University of Technology, Chongqing 400054, P.R. China}
\email{wangaf@cqut.edu.cn}
\author[L. Wang]{Lili Wang}
\address{School of Mathematical Sciences, Chongqing University of Technology, Chongqing 400054, P.R. China}
\email{wllaf@cqut.edu.cn}
\thanks{This work was supported by the Science and Technology Research Program of Chongqing Municipal
Education Commission (KJZD-K2024011102) and the Chongqing Natural Science Foundation Innovation
and Development Joint Fund (Municipal Education Commission) (CSTB2025NSCQ-LZX0067).}
\subjclass[2020]{Primary 16Y60; Secondary 08B05, 20M07, 03C05}
\keywords{Multiplicatively idempotent semiring, finite identity basis, hereditary finite basability, band, semiring variety}
\date{}

\begin{document}
\begin{abstract}
The variety of semirings satisfying $x^2\approx x$ is hereditarily finitely based in the signature consisting of addition and multiplication, without designated constants. Multiplication may be noncommutative, and the semirings need not be finite. A subdirect decomposition reduces the proof to the subvarieties defined by $3x\approx x$ and $3x\approx2x$. In the latter case, we express each polynomial identity using a common doubled part and isolated word occurrences. Four fixed identities align the first- and last-occurrence words of these occurrences. Induction on their content extracts protected identities. The associated word identities define a band variety, to which the finite basis theorem for bands applies.
\end{abstract}
\maketitle

\section{Introduction}
We ask whether every multiplicatively idempotent semiring has a finite identity basis in the signature $\{+,\cdot\}$. A semiring is an algebra $(S,+,\cdot)$ in which addition is associative and commutative, multiplication is associative, and multiplication distributes over addition from both sides. We call a semiring \emph{multiplicatively idempotent} if it satisfies $x^2\approx x$, and write $\MI$ for the variety of all such semirings. Neither an additive identity nor a multiplicative identity is part of the signature, and we do not assume commutative multiplication. For a positive integer $n$, the notation $nx$ denotes a sum of $n$ copies of $x$.

An algebra is \emph{finitely based} if its identities follow from a finite set of identities. A variety is \emph{hereditarily finitely based} if each of its subvarieties is finitely based. Additively idempotent semirings can be nonfinitely based even when their multiplicative reducts are finitely based; see Jackson, Ren and Zhao~\cite{JRZ}. Ren, Yue and Zhou~\cite{RYZ} investigate inherent nonfinite basability in the additively idempotent setting.

For multiplicatively idempotent semirings, Vechtomov and Petrov~\cite{VP} proved that $\MI$ is locally finite and that, when multiplication is commutative, its members decompose subdirectly into semirings satisfying $3x\approx x$ and $3x\approx2x$. We establish this decomposition for noncommutative multiplication as well.

In the commutative setting, Chajda and L\"anger~\cite{CLSI,CLV} studied multiplicatively idempotent semirings with designated constants $0$ and $1$. They obtained a finite basis and polynomial normal forms for the variety generated by the three-element semiring $T_3$~\cite{CLV}. Its $\{+,\cdot\}$-reduct occurs below as an obstruction to the identity $2(x+y)\approx x+y$. We work with this reduct in the constant-free signature.

Wang and Shao~\cite[Corollary~3.4]{WS} proved hereditary finite basability for the semiring variety determined, for each integer $n\ge2$, by
\[
 x^n\approx x,\qquad x+(2n-2)xyx\approx x.
\]

At $n=2$, this theorem applies to the subvariety of $\MI$ satisfying $x+2xyx\approx x$. The reduct of $T_3$ used below fails this absorption identity. This failure shows that the theorem does not cover all of $\MI$.

Every variety of bands is finitely based~\cite{GP}. The variety of semirings in which both operations are idempotent has $78$ subvarieties, all finitely based~\cite{GPZ,Pastijn,Polak}. These results do not by themselves control the undoubled summands of a semiring polynomial. An arbitrary band can occur as a multiplicative subsemigroup after adjoining an absorbing element and making addition constant. The multiplicative reducts of these semirings need not satisfy a regular-band identity.

We prove the following theorem.

\begin{theorem}\label{thm:main}
The variety $\MI$ is hereditarily finitely based in the signature $\{+,\cdot\}$.
\end{theorem}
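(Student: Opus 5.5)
The plan follows the outline in the abstract. First we reduce to two subvarieties. Let $\mathcal V_1=\MI\cap[3x\approx x]$ and $\mathcal V_2=\MI\cap[3x\approx 2x]$. We show that every $S\in\MI$ is a subdirect product of a member of $\mathcal V_1$ and a member of $\mathcal V_2$, without assuming commutativity.

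For the decomposition, note that in $\MI$ we have $(x+x)^2=4x$, so $2x\approx 4x$. This is the identity that makes the additive reduct of each $nx$-tower collapse. Take the congruences $\theta_1$ generated by $3x\approx x$ and $\theta_2$ generated by $3x\approx 2x$. We then check directly that $\theta_1\cap\theta_2$ is the identity relation. The key computation is that if $3a=a$ and $3a=2a$ hold modulo the two congruences, then $a=a+2a+\dots$ can be recovered using $4x=2x$ and distributivity. This will likely use the fact that $\{x,2x,3x\}$ collapses suitably.

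Once the decomposition holds, every subvariety $\mathcal W\subseteq\MI$ is the join $(\mathcal W\cap\mathcal V_1)\vee(\mathcal W\cap\mathcal V_2)$. If $\Sigma_i$ is a finite basis of $\mathcal W\cap\mathcal V_i$, then $\mathcal W$ is axiomatized by $x^2\approx x$ together with identities of the form $u+2v$-combinations. Concretely, from $\sigma\in\Sigma_1$ and $\tau\in\Sigma_2$ we combine them by the standard trick for joins of varieties that are "independent". Because $3x\approx x$ and $3x\approx 2x$ are complementary, the join is axiomatized by finitely many identities. So it suffices to prove that $\mathcal V_1$ and $\mathcal V_2$ are each hereditarily finitely based.

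For $\mathcal V_1$, the additive reduct satisfies $3x=x$. We would show that $\mathcal V_1$ is generated by rings satisfying $x^2=x$ (Boolean rings, of characteristic $2$) together with idempotent-additive pieces. Here $2x$ behaves like a zero-ish idempotent, and $x+2xyx$-type absorption can be derived. The Wang--Shao result \cite[Corollary~3.4]{WS} at $n=2$ should then cover the relevant subvariety once we check that $\mathcal V_1\models x+2xyx\approx x$. The reduct of $T_3$ is the obstruction to this identity, and it lives in $\mathcal V_2$, so we expect the check to go through for $\mathcal V_1$.

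For $\mathcal V_2$, every polynomial can be written as $p=2D+w_1+\dots+w_k$. Here $D$ is a set of words (the doubled part, which is idempotent, since $2x+2x=4x=2x$), and the $w_i$ are isolated word occurrences. Given a subvariety $\mathcal W\subseteq\mathcal V_2$, we fix four identities in $\mathcal W$ that align the first-occurrence and last-occurrence words $\first(w)$ and $\last(w)$. These are identities such as $2xyx+x\approx\dots$. Using them, any identity $p\approx q$ of $\mathcal W$ can be reduced, by induction on the content of the words, to "protected" identities. A protected identity matches doubled parts and compares single occurrences word-by-word.

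The word identities $u\approx v$ that arise this way define a band variety $\BB$. By \cite{GP}, $\BB$ has a finite basis. Lifting this basis, together with the four fixed identities and finitely many identities for the doubled part (the doubled part lies in an idempotent-semiring variety, which has only $78$ subvarieties, all finitely based by \cite{GPZ,Pastijn,Polak}), gives a finite basis for $\mathcal W$.

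The main obstacle is the content induction in $\mathcal V_2$. We must show that every identity of $\mathcal W$ follows from the finite set. The undoubled summands interact with the doubled part by absorption, and this interaction is not controlled by the band structure alone. I would first try to prove a normal form in which each isolated word $w$ is replaced by one whose prefix and suffix data are determined by $D$. Then I would show that the remaining freedom is exactly a band identity.
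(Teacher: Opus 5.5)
\textbf{The branch $3x\approx x$ fails as planned.} Your plan there rests on the claim $\RR\models x+2xyx\approx x$, and this claim is false. Since $2x\approx x$ implies $3x\approx x$, we have $\IB\subseteq\RR$, and $\IB$ contains semirings that violate the absorption identity. For example, take the chain $\{0<1\}$ with $x+y=xy=\max(x,y)$; the values $x=0$, $y=1$ give $x+2xyx=1\ne0$. So $\TT$ is not the only obstruction, and the theorem of Wang and Shao does not reach $\RR$.

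The paper handles $\RR$ through Proposition~\ref{prop:r}. For $S\in\RR\setminus\IB$ it shows $\Var(S)=\Var(d(S))\vee\BR$. The key input is that each fibre $G_e=\{a:2a=e\}$ is a Boolean ring. After adding $e=2P$, words of equal content coincide in that fibre (Lemma~\ref{lem:fibre-content}), so the value of $P$ is determined by $2P$ and the content parity. Hence every subvariety of $\RR$ is either some $\mathsf U\subseteq\IB$ or $\mathsf U\vee\BR$. The latter is axiomatized by $\sr$, $x^2\approx x$, $3x\approx x$, and the doubles $2P\approx2Q$ of a basis of $\mathsf U$.

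\textbf{Three further steps need repair.}

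First, the decomposition. Your $\theta_1$ equals $\ker r$ and $\theta_2\subseteq\mathcal H_+$, so the framing is right. But you only gesture at why the intersection is trivial, and this step genuinely needs multiplicative idempotence. The paper reduces to $b=a+c$, $a=b+c$, $3c=c$, $3a=3b$, and then expands $b=(a+c)^2=a+ac+ca+c$ (Proposition~\ref{prop:decompose}).

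Second, the passage from the decomposition to finite bases. $\RR$ and $\AA$ are not independent, since both contain $\IB$, and a join of finitely based varieties need not be finitely based. What works is the compactness argument of Lemma~\ref{lem:compact}. Finitely many identities of $\mathsf W$ cut out a variety $\mathsf Z\supseteq\mathsf W$ with the same intersections with $\RR$ and $\AA$. The decomposition applied to $\mathsf Z$ itself then forces $\mathsf Z=\mathsf W$.

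Third, your comparison of isolated occurrences in $\AA$ presupposes $\TT\in\mathsf W$. Only then do the truncated counts and isolated contents of the two sides agree (Lemma~\ref{lem:count}). Subvarieties omitting $\TT$ lie in $\JJ$ (Proposition~\ref{prop:forcedT}). There $x+y\approx2x+2y$ holds even though the two sides have different isolated contents. These subvarieties need the separate splitting $(\mathsf W\cap\IB)\vee(\mathsf W\cap\KK)$ of Proposition~\ref{prop:j}.

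Beyond that, your outline for $\AA$ matches the paper's, but what you call the main obstacle is exactly its substance:
\begin{enumerate}
\item the lifting Lemma~\ref{lem:lifting}, which makes the doubled part common to both sides;
\item the precise protected form $u+2u\omega u\approx v+2v\omega v$, which makes $\G_{\mathsf W}$ a fully invariant band congruence and allows an absorbed full-content word to be substituted for $\omega$;
\item the guarded identities $H_\ell,J_\ell,H_r,J_r$, which are used only when they hold in $\mathsf W$, and Lemma~\ref{lem:extract-guards} guarantees they do whenever alignment is needed;
\item the content induction of Theorem~\ref{thm:extraction}.
\end{enumerate}
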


\begin{corollary}\label{cor:algebras}

Every multiplicatively idempotent semiring has a finite identity basis. Every finite multiplicatively idempotent semiring is finitely based.

\end{corollary}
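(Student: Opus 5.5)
The corollary should follow from Theorem~\ref{thm:main} together with local finiteness of $\MI$ due to Vechtomov and Petrov~\cite{VP}. For the first assertion, let $S$ be any multiplicatively idempotent semiring and let $\Var(S)$ be the variety it generates. Since $S\models x^2\approx x$, we have $\Var(S)\subseteq\MI$. By Theorem~\ref{thm:main}, $\Var(S)$ is then defined by a finite set $\Sigma$ of identities. An algebra and the variety it generates satisfy the same identities, so $\Sigma$ is a finite identity basis for $S$ itself. Note that this step needs no finiteness of $S$, only the hereditary statement.

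For the second assertion, I would specialise to a finite $S\in\MI$. Being finitely based is a property of the equational theory of $S$, and the argument above applies verbatim. If one also wants to record that this basis can be found effectively, one can note that $\MI$ is locally finite~\cite{VP}. Hence $\Var(S)$ is locally finite, and its $n$-generated free algebras are finite and computable from $S$. Checking a candidate finite set of identities then reduces to finite computations. This effectivity remark is optional and not needed for the claim.

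I do not expect any real obstacle here, since all the work is in Theorem~\ref{thm:main}. The only point to state carefully is that ``finitely based'' for an algebra means that its equational theory $\Id(S)=\Id(\Var(S))$ is finitely axiomatizable. The basis must also be taken in the same constant-free signature $\{+,\cdot\}$ as in the theorem, so that the subvariety $\Var(S)$ is indeed among those covered by the hereditary statement.
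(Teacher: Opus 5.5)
Your argument is correct and is the paper's own one-line proof: $\Var(S)\subseteq\MI$, Theorem~\ref{thm:main} gives a finite basis for $\Var(S)$, and $\Id(S)=\Id(\Var(S))$; the finite case is a special instance. Drop the optional effectivity remark, which plays no role: local finiteness lets you check that a candidate set holds in $S$ and compare free algebras for each fixed number of generators, but that does not certify that the set derives every identity of $S$, so the claimed reduction to finite computations is unjustified.
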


The defining axioms give a finite basis for $\MI$ itself. Theorem~\ref{thm:main} asserts finite basability of every subvariety, a conclusion that does not follow from local finiteness.

For the subvariety satisfying $3x\approx2x$, we express each polynomial identity using a common doubled part and isolated word occurrences. We apply the theory of doubly idempotent semirings to the doubled part; the isolated word occurrences retain the original band structure. Four fixed identities align their first- and last-occurrence words. Once these words agree, recursive extraction gives an identity of the form
\[
 u+2u\omega u\approx v+2v\omega v,
\]
where $\omega$ is a fresh variable. The associated word identities form a fully invariant band congruence. The finite basis theorem for bands applies to the resulting band variety.

\section{Preliminaries}\label{sec:prelim}

We follow the terminology of universal algebra in~\cite{BS}. For an algebra $S$, let $\Var(S)$ be the variety generated by $S$. For a variety $\mathsf W$, let $\Id(\mathsf W)$ be its identity theory. We take bases for semiring subvarieties relative to the following finite set $\sr$:
\begin{equation}\label{eq:sr}
\begin{gathered}
 x+y\approx y+x,\qquad (x+y)+z\approx x+(y+z),\qquad (xy)z\approx x(yz),\\
 x(y+z)\approx xy+xz,\qquad (x+y)z\approx xz+yz.
\end{gathered}
\end{equation}
We have $\MI=\Mod(\sr,x^2\approx x)$. We write identities with $\approx$ and use equality for literal words and for elements under a fixed evaluation.

Fix a countably infinite alphabet. A \emph{word} is a nonempty multiplicative word, and $\ct(w)$ is its content. Let $\first(w)$ and $\last(w)$ denote the \emph{first-occurrence word} and \emph{last-occurrence word} of $w$: retain, respectively, the first and the last occurrence of each variable in its original left-to-right order. A polynomial is a finite nonempty sum of word occurrences. We count occurrences with multiplicity, so $u+u$ has two occurrences. An occurrence is \emph{supported in} a set of variables when its content is a subset of that set. Every semiring term expands into such a polynomial. Missing contexts and omitted sums are notation only; we never substitute an empty word or an empty sum for a variable.

Write $\BB$ for the variety of bands, that is, idempotent semigroups, and $B_X$ for the free band on a finite nonempty alphabet $X$. Every subvariety of $\BB$ has a finite basis~\cite{GP}. The structure theorem for bands~\cite{McLean} gives the following deletion rule.

\begin{lemma}\label{lem:deletion}
Let $p,b,q$ be nonempty words. If $\ct(p)=\ct(q)$ and $\ct(b)\subseteq\ct(p)$, then
\[
 \BB\models pbq\approx pq.
\]
The identity $aba\approx a$ holds in every band whenever $\ct(b)\subseteq\ct(a)$.
\end{lemma}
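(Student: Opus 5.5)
The first claim follows from Green's relations in the free band. The second claim is the special case $p=q=a$.

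I use two standard facts about bands. First, two words $w_1,w_2$ are equal in every band iff $\ct(w_1)=\ct(w_2)$ and their images agree in $B_{\ct(w_1)}$. Second, by the structure theorem for bands~\cite{McLean}, every band is a semilattice $Y$ of rectangular bands $D_\alpha$. The map to $Y$ is the content map: in $B_X$, two elements lie in the same $\mathcal D$-class exactly when they have the same content.

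Fix a band $S$ and an evaluation. Put $P=\ct(p)=\ct(q)$. Since $\ct(b)\subseteq P$, the elements $p$, $q$, $pbq$ and $pq$ all lie in one component $D_\alpha$, namely the one indexed by the image of $P$ in $Y$. Indeed the content of $pbq$ is $P\cup\ct(b)=P$, and contents map homomorphically to $Y$. The element $b$ lies in a component $D_\beta$ with $\alpha\le\beta$. The element $pb$ also lies in $D_\alpha$.

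Now $D_\alpha$ is a rectangular band, so it satisfies $xyz\approx xz$. Apply this to $x=pb$, $y=p$, $z=q$, all in $D_\alpha$:
\[
pbq = (pb)\,q = (pb)(p)\cdot\ldots
\]
This does not directly remove $b$, so I argue more carefully. Using $p=pp$, we have $pbq=p\,(pbq)$ and $pq=p\,q$. In $D_\alpha$, the rectangular law gives $p\cdot(pb)\cdot q=p\cdot q$, since $p$, $pb$ and $q$ all lie in $D_\alpha$. Hence
\[
pbq=(pp)bq=p\,(pb)\,q=pq.
\]
Here associativity and idempotence justify $pbq=ppbq$, and the rectangular identity $xyz=xz$ is applied to $x=p$, $y=pb$, $z=q$. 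This holds for every evaluation in every band, so $\BB\models pbq\approx pq$.

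For the second claim, take $p=q=a$. Then $\ct(p)=\ct(q)$, and the hypothesis $\ct(b)\subseteq\ct(a)$ is exactly the one needed. The first claim gives $aba\approx aa\approx a$, using $x^2\approx x$.

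The only nontrivial input is the structure theorem. Its relevant consequence is that elements whose images in the semilattice $Y$ coincide form a rectangular band. That is where the main weight lies, and it is quoted from~\cite{McLean}. The rest is routine bookkeeping on contents: everything reduces to checking that $p$, $pb$ and $q$ share the $Y$-coordinate. This holds because the map $S\to Y$ is a homomorphism, $Y$ is a semilattice, and $\ct(b)\subseteq P$ forces $[b]\ge[p]$, so $[pb]=[p]$.
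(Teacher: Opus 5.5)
Your proof is correct and is the paper's argument: you place $p$, $pb$ and $q$ in one rectangular component of the semilattice decomposition, compute $pbq=p(pb)q=pq$ with $xyz\approx xz$, and then specialize to $p=q=a$. You can delete the abandoned first attempt with $(pb)(p)\cdots$ and the unused fact about $B_{\ct(w_1)}$.
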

\begin{proof}
Every band is a semilattice of rectangular bands. Under an arbitrary evaluation, let $P,B,Q$ be the values of $p,b,q$. The content assumptions imply that $P,PB,Q$ belong to the same rectangular component. Since a rectangular band satisfies $xyz\approx xz$,
\[
 PBQ=P(PB)Q=PQ.
\]
Taking $p=q=a$ proves the final assertion.
\end{proof}

For a word $w$ involving at least two variables, let $s(w)$ be its longest prefix whose content is a proper subset of $\ct(w)$, and let $e(w)$ be its longest suffix with proper content. These are actual factors of $w$, possibly with repeated variables. If $a$ is the last letter of $\first(w)$ and $b$ is the first letter of $\last(w)$, then
\begin{equation}\label{eq:proper-content}
 \ct(s(w))=\ct(w)\setminus\{a\},\qquad
 \ct(e(w))=\ct(w)\setminus\{b\}.
\end{equation}

Vechtomov and Petrov~\cite[p.~54 of the Russian original]{VP} observed that $\MI$ is locally finite. The proof below bounds the number of normal forms used later.

\begin{lemma}\label{lem:localfinite}
Every finitely generated free band is finite. The variety $\MI$ is locally finite.
\end{lemma}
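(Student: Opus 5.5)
We prove the two assertions in order. The second reduces to the first by a normal-form count for polynomials.

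\emph{Free bands are finite.} We argue by induction on $|X|$ that $B_X$ is finite, using Lemma~\ref{lem:deletion} and the factors $s(w),e(w)$. For $|X|=1$ the band identity $x^2\approx x$ gives $B_X=\{x\}$. For $|X|\ge2$ the key step is to show that every word $w$ with $\ct(w)=X$ satisfies, in $\BB$,
\[
 w\approx s(w)\,a\,b\,e(w),
\]
where $a$ is the last letter of $\first(w)$ and $b$ is the first letter of $\last(w)$.

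To prove this, write $w=s(w)\,a\,w'$; this is possible because $a$ is exactly the letter whose first occurrence ends the prefix $s(w)$. Likewise, $w=w''\,b\,e(w)$. The prefix $s(w)a$ has full content $X$, and so does the suffix $b\,e(w)$, by \eqref{eq:proper-content}. We then apply Lemma~\ref{lem:deletion} with $p=s(w)a$, $q=b\,e(w)$, and $b$ the middle factor between them. This deletes the middle factor and yields $w\approx s(w)ab\,e(w)$. The main care is needed when the prefix $s(w)a$ and the suffix $b\,e(w)$ overlap in $w$, so that no middle factor is available. In that case we first use idempotency to duplicate the overlapping factor, $u\approx uu$, and then apply the deletion rule.

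Since $s(w)$ and $e(w)$ have content of size $|X|-1$, the induction hypothesis gives finitely many values for each of them in the band. The letters $a$ and $b$ range over $X$. Hence
\[
 |B_X|\le \sum_{\emptyset\ne Y\subsetneq X}|B_Y|\cdot(\text{bounded factor})+\cdots,
\]
and $B_X$ is finite. More precisely, we count by content: the number of elements with content exactly $X$ is at most $|X|^2\bigl(\max_{|Y|=|X|-1}|B_Y|\bigr)^2$.

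\emph{$\MI$ is locally finite.} Let $F$ be the free $\MI$-algebra on a finite set $X$. Every term expands into a polynomial, that is, a sum of words. Modulo $x^2\approx x$, each word may be replaced by its image in $B_X$. This is justified because the multiplicative reduct of any member of $\MI$ is a band, so band identities between words hold in $\MI$. Thus each element of $F$ is represented by a function $B_X\to\mathbb N$ recording multiplicities, with the total multiplicity positive.

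It remains to bound the multiplicities. For any element $t$ of $F$ we have $t\approx t^2$. Expanding $(t+t)^2$ gives $2t\approx(2t)^2=4t^2=4t$. Hence $4x\approx 2x$ holds in $\MI$, and every coefficient can be reduced to one of $0,1,2,3$. Therefore
\[
 |F|\le 4^{|B_X|},
\]
which is finite, so $\MI$ is locally finite.

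The main obstacle is the free-band step, specifically the overlapping case in the displayed identity. If the direct argument above proves awkward, the alternative is to invoke the structure theorem of~\cite{McLean}. In the free band, two words are equal iff they have the same content and equal data $(s,a,b,e)$ recursively. This gives finiteness immediately by the same counting.
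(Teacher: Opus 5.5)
Your proof is correct and is essentially the paper's argument. Both proofs write a full-content word as $w\approx s(w)a\cdot b\,e(w)$, using Lemma~\ref{lem:deletion} when the shortest full-content prefix and suffix are separated and duplicating the overlap otherwise (in that case $ahb\approx ahhb$ already equals $pq$, so no further deletion is needed), and then reduce coefficients with $4x\approx 2x$. The only cosmetic difference is that you count the elements of content $X$ recursively, whereas the paper bounds representative lengths by $L_n=2L_{n-1}+2$.
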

\begin{proof}
Consider a word $w$ on at most $n$ variables. Let $p$ and $q$ be its shortest full-content prefix and suffix. If these factors are separated, Lemma~\ref{lem:deletion} removes the intervening factor. If they overlap, writing $p=ah$, $q=hb$ gives $pq=ahhb\approx ahb=w$. Both cases give $w\approx pq$. Writing $p=s(w)a$ and $q=be(w)$ and proceeding by induction shows that every word is equivalent to one of length at most
\[
 L_1=1,\qquad L_n=2L_{n-1}+2=3\cdot2^{n-1}-2.
\]
This proves that $B_X$ is finite.

In $\MI$ we have
\begin{equation}\label{eq:fourtwo}
 2x=(x+x)^2=4x.
\end{equation}
Equation~\eqref{eq:fourtwo} reduces every positive coefficient in a polynomial to $1$, $2$ or $3$. Replacing each word by a representative from $B_X$ leaves at most $4^{|B_X|}-1$ nonempty polynomials with these coefficients, which proves local finiteness.
\end{proof}

This bound counts polynomial representatives; distinct coefficient vectors may define the same term function.

\begin{lemma}\label{lem:retractions}
For $S\in\MI$, the maps $d(a)=2a$ and $r(a)=3a$ are idempotent endomorphisms. Their images are their respective fixed-point sets, and
\[
 d(S)\in\IB:=\MI\cap\Mod(2x\approx x).
\]
For every $\mathsf W\subseteq\MI$, the class of images $d(S)$, $S\in\mathsf W$, is precisely $\mathsf W\cap\IB$.
\end{lemma}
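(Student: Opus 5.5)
The plan is to verify each claim directly from the semiring axioms, using $x^2\approx x$ and \eqref{eq:fourtwo}. First I check that $d$ and $r$ are endomorphisms. Additivity is immediate from commutativity and associativity of $+$:
\[
2(a+b)=2a+2b,\qquad 3(a+b)=3a+3b.
\]
For multiplicativity, distributivity gives $(na)(nb)=n^2\,ab$. Hence
\[
d(a)d(b)=4ab=2ab=d(ab)
\]
by \eqref{eq:fourtwo}. Similarly $r(a)r(b)=9ab$, and $9x=4x+4x+x=2x+2x+x=4x+x=2x+x=3x$, applying \eqref{eq:fourtwo} repeatedly. So $r(a)r(b)=3ab=r(ab)$.

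Next comes idempotence, which gives the image and fixed-point description. We have $d(d(a))=4a=2a$ and $r(r(a))=9a=3a$. For any idempotent map, the image equals the fixed-point set: if $c=f(a)$ then $f(c)=c$, and conversely every fixed point lies in the image.

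To show $d(S)\in\IB$, note that $d(S)$ is a subsemiring, being the image of an endomorphism. It lies in $\MI$ because $\MI$ is closed under subalgebras. Every element $c=2a$ satisfies $2c=4a=2a=c$, so $d(S)\models 2x\approx x$.

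For the final assertion, fix $\mathsf W\subseteq\MI$ and $S\in\mathsf W$. Then $d(S)$ is a subalgebra of $S$, so $d(S)\in\mathsf W$, and by the previous step $d(S)\in\mathsf W\cap\IB$. Conversely, if $T\in\mathsf W\cap\IB$, then $d$ is the identity on $T$, so $T=d(T)$ with $T\in\mathsf W$.

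There is no real obstacle here. The only step needing care is the coefficient arithmetic $9x\approx3x$, which must be derived from $4x\approx2x$ through repeated reduction rather than assumed. I would also state explicitly that $d(S)$ is nonempty and closed under both operations, so that it is a genuine subalgebra in the constant-free signature.
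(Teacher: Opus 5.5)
Your proof is correct and follows essentially the same route as the paper. Both check the endomorphism property via $4x\approx2x$, note that $d^2=d$ and $r^2=r$, and observe that $d$ fixes every member of $\mathsf W\cap\IB$; the only differences are cosmetic: you spell out $9x\approx3x$, and you place $d(S)$ in $\mathsf W$ as a subalgebra of $S$ rather than as a homomorphic image, which is equally valid.
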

\begin{proof}
Additivity follows from commutativity of addition. Equation~\eqref{eq:fourtwo} gives
\[
 (2a)(2b)=4ab=2ab,\qquad (3a)(3b)=9ab=3ab,
\]
and $d^2=d$, $r^2=r$. The image of an idempotent map is its fixed-point set. Every $d(S)$ is a homomorphic image of $S$ and belongs to $\IB$. For $S\in\mathsf W\cap\IB$, we have $d(S)=S$.
\end{proof}

An algebra in $\IB$ is called \emph{doubly idempotent}. We use three results:
\begin{enumerate}[label=(\roman*),leftmargin=2.4em]
\item every subvariety of $\IB$ is finitely based;
\item every multiplicative reduct of a member of $\IB$ is a regular band;
\item in the variety $\RB$ of regular bands,
\begin{equation}\label{eq:rbword}
 u\approx v\quad\Longleftrightarrow\quad
 \first(u)=\first(v)\text{ and }\last(u)=\last(v).
\end{equation}
\end{enumerate}
Here $\RB=\Mod(x^2\approx x,xyxzx\approx xyzx)$. Statement (i) follows from~\cite{GPZ,Pastijn}; see~\cite[Result~4 and Theorem~7]{Polak}. Pastijn and Zhao~\cite[Theorem~2.3]{PZ} proved (ii), and Pol\'ak~\cite[Result~1]{Polak} recalls the word criterion in (iii). We include a verification of (ii).

\begin{lemma}\label{lem:regular-double}
Every member of $\MI$ satisfies $2xyxzx\approx2xyzx$.
\end{lemma}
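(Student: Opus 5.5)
The plan is to derive the identity from the consequences of $(s+t)^2\approx s+t$. In $\MI$ these read
\[
 s+t\approx s+t+st+ts .
\]
The strategy has three moves: substitute suitable words for $s$ and $t$; multiply the result on the left and right by fixed words, so that every summand becomes a conjugate of $a:=xyzx$ or $b:=xyxzx$; and reduce the resulting words with the band identities of Lemma~\ref{lem:deletion}, which are available because the multiplicative reduct of any $S\in\MI$ is a band. Coefficients are then controlled by equation~\eqref{eq:fourtwo}, i.e. $4c=2c$. The target is to show $2a\approx 2b$.

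\textbf{Step 1.} Take $s=xy$ and $t=zx$, and multiply the identity above by $x$ on both sides. Using $x^2\approx x$, this gives
\[
 xyx+xzx\approx xyx+xzx+xyzx+xzxyx .
\]
Multiplying on the left by $xy$ and on the right by $zx$, the left side becomes $xyxzx+xyxzx=2b$. On the right, the term $xy\cdot xyzx\cdot zx$ reduces to $a$ (since $xyxy=xy$ and $zxzx=zx$), and the term $xy\cdot xzxyx\cdot zx$ equals $b^2=b$. Hence
\[
 2b\approx 3b+a .
\]

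\textbf{Step 2.} Next I would obtain a companion relation with the roles reversed, of the shape $2a\approx 3a+b$, or more generally $2a\approx 2a+(\text{terms reducing to }b)$. For this I would start from $s=yx$ and $t=xz$, which after conjugation by $x$ gives
\[
 xyx+xzx\approx xyx+xzx+xyxzx+xzyx .
\]
I would then multiply by suitable flanking words such as $xyz$ on the left and $x$ on the right, or $xy$ and $zx$ again. The aim is that the two original summands collapse to $2a$, while one extra summand becomes $b$ and the other is absorbed, for instance by Lemma~\ref{lem:deletion} in the form $pbq\approx pq$ when the contents of the flanking words agree.

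\textbf{Step 3.} With both relations in hand, I would combine them using $4c\approx 2c$ and the additive identities $2(2c)\approx 2c$. From $2b\approx 3b+a$ one gets
\[
 2b\approx 2(3b+a)\approx 2b+2a ,
\]
and symmetrically $2a\approx 2a+2b$. Therefore $2a\approx 2a+2b\approx 2b$, which is $2xyzx\approx 2xyxzx$.

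The main obstacle is Step 2. Producing $b$ from $a$ is easy because $b$ refines $a$ (it contains $a$'s letters with the extra $x$), but going back requires a product in which the extra middle $x$ is deleted. Lemma~\ref{lem:deletion} only permits such deletions between factors of equal content, and a bare $a$ does not supply them. If the direct multiplication fails, I would try first multiplying the Step 1 identity by $xyz$ on the left and $x$ on the right, or using the substitution $s=xyz$, $t=x$, to reach a relation of the form $2a\approx 2a+b$, from which the symmetric conclusion of Step 3 still follows.
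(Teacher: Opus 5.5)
You have a genuine gap at Step 2. Step 1 is correct: it is the paper's first inequality, obtained as $xyx\,(xy+zx)^2\,xzx$ instead of $xyx\,(y+z)^2\,xzx$. Step 3 combines two absorption relations correctly. The problem is that you never derive $2xyzx\approx 2xyzx+2xyxzx$, and the one-step shape you are looking for cannot exist.

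Suppose one instance $(S+T)^2\approx S+T$, flanked by words $p,q$, gave $pSq\approx pTq\approx xyzx$ and $pSTq\approx xyxzx$ in bands. In the free band, $xyxzx$ differs from $xyzx$ in two places: in the prefix before the first $z$ ($xyx$ versus $xy$) and in the suffix after the last $y$ ($xzx$ versus $zx$). Hence the first $z$ of $pSTq$ cannot lie in $p$ or $S$, and its last $y$ cannot lie in $T$ or $q$. So $z\notin\ct(p)\cup\ct(S)$ and $y\notin\ct(T)\cup\ct(q)$.
\begin{itemize}
\item If that last $y$ lies in $p$, then $S$ is a power of $x$. The suffix of $pSq$ after its last $y$ then begins with $x$ rather than $z$, a contradiction.
\item If it lies in $S$, then the parts of $S$ and of $p$ after their last $y$ are powers of $x$, and both must be empty. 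Comparing suffixes after the last $y$ now gives $Tq\approx xzx$ from $pSTq$ and $Tq\approx zx$ from $pTq$, which is impossible.
\end{itemize}
The case $pTSq$ is symmetric. So none of your candidate flankings can work, and neither can your fallbacks, since they are again single instances of this shape.

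The missing idea is an intermediate word. The paper uses $t_0=xyzyxzx$.
\begin{itemize}
\item The instance $xyz\,(y+x)^2\,zx$ gives $2xyzx\approx 3xyzx+t_0$, hence $2xyzx\approx 2xyzx+2t_0$.
\item The instance $xy\,(x+z)^2\,yzyxzx$ gives $2t_0\approx 3t_0+xyxzx$, using the band identity $xyxzyzyxzx\approx xyxzx$. Hence $2t_0\approx 2t_0+2xyxzx$.
\end{itemize}
These combine as
\[
 2xyzx\approx 2xyzx+2t_0\approx 2xyzx+2t_0+2xyxzx\approx 2xyzx+2xyxzx,
\]
which is exactly the relation your Step 3 needs. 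The paper packages the same three computations through the additive order on the doubled image, applying the rule $rus=rvs=t\Rightarrow ruvs\le t$ three times to get the cycle $xyxzx\le t_0\le xyzx\le xyxzx$.
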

\begin{proof}
In a doubly idempotent semiring, put $a\le b$ when $a+b=b$. Multiplication is monotone, and $(u+v)^2=u+v$ implies $uv\le u+v$. Multiplying this inequality on the left and right gives
\begin{equation}\label{eq:order-step}
 rus=rvs=t\quad\Longrightarrow\quad ruvs\le t.
\end{equation}
Let $p=xyxzx$, $q=xyzx$, and $t_0=xyzyxzx$. The following applications of~\eqref{eq:order-step} use only band identities to simplify their entries:
\[
\begin{array}{c|c|c|c|c|c}
 r&u&v&s&rus=rvs&ruvs\\\hline
 xyx&y&z&xzx&p&q\\
 xyz&y&x&zx&q&t_0\\
 xy&x&z&yzyxzx&t_0&p
\end{array}
\]
The three rows give $q\le p$, $t_0\le q$, and $p\le t_0$, proving $p=q$. Apply this result to the doubled image in Lemma~\ref{lem:retractions}.
\end{proof}

\section{Structural reductions}\label{sec:reduction}

Put
\[
 \RR=\MI\cap\Mod(3x\approx x),\qquad
 \AA=\MI\cap\Mod(3x\approx2x).
\]
Vechtomov and Petrov~\cite[Proposition~3.1]{VP} proved this decomposition for commutative multiplication. We extend it to arbitrary members of $\MI$ and to each subvariety.

\begin{proposition}\label{prop:decompose}
Every $S\in\MI$ has a subdirect embedding
\begin{equation}\label{eq:decompose}
 S\hookrightarrow r(S)\times S/\mathcal H_+,
 \qquad a\longmapsto(3a,[a]_{\mathcal H_+}),
\end{equation}
where $r(S)\in\RR$ and $S/\mathcal H_+\in\AA$. For every $\mathsf W\subseteq\MI$,
\begin{equation}\label{eq:join-ra}
 \mathsf W=(\mathsf W\cap\RR)\vee(\mathsf W\cap\AA).
\end{equation}
\end{proposition}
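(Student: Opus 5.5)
The plan is to take $\mathcal H_+$ to be Green's relation $\mathcal H$ of the commutative semigroup $(S,+)$. Since addition is commutative, $\mathcal H_+=\mathcal J_+$, so $a\mathrel{\mathcal H_+}b$ means that $a=b$, or $a=b+s$ and $b=a+t$ for some $s,t\in S$. The proof has three verifications, followed by the join statement.

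\emph{Step 1: $\mathcal H_+$ is a congruence with quotient in $\AA$.} Additive compatibility holds for Green's relations in any commutative semigroup. Multiplicative compatibility follows from distributivity: $a=b+s$ gives $ca=cb+cs$ and $ac=bc+sc$, and similarly with $a,b$ exchanged. Hence $S/\mathcal H_+$ is a semiring, and it lies in $\MI$ as a homomorphic image. To get $3x\approx2x$ in the quotient, use \eqref{eq:fourtwo}: $2a=4a=3a+a$ and $3a=2a+a$, so $3a\mathrel{\mathcal H_+}2a$. Separately, $r(S)\in\RR$ by Lemma~\ref{lem:retractions}, since $r(S)$ is the fixed-point set of $r$, and so $3(3a)=9a=3a$.

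\emph{Step 2: the map in \eqref{eq:decompose} is injective.} I expect this to be the main obstacle. The map is a homomorphism because both components are homomorphisms, and it is subdirect because both components are surjective. So suppose $3a=3b$, $a=b+s$ and $b=a+t$. Put $u=t+s$. Then $a=a+u=a+nu$ for every $n$, and in particular $a=a+e$ with $e=2u$; likewise $b=b+e$. Here $e$ is an additive idempotent, because $e+e=4u=2u$ by \eqref{eq:fourtwo}.

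The aim is to derive $a=b$ from these relations together with $3a=3b$. The purely additive information does not seem to suffice, so I would bring in multiplicative idempotence. First I would expand $a=a^2=(b+s)^2=b+bs+sb+s$, and the symmetric expression $b=a+at+ta+t$. Then I would add suitable copies of these equations, using $4x=2x$, to show that $a=a+2a$ and $b=b+2b$, and more generally that $a=3a+w$ and $b=3b+w$ for a common summand $w$ built from $s,t,a,b$. The hypothesis $3a=3b$ would then close the argument. If this direct manipulation gets stuck, my fallback is to pass to the doubled image $d(S)\in\IB$, where $\le$ is a partial order, and compare $2a$ with $2b$ there.

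\emph{Step 3: the join \eqref{eq:join-ra}.} The inclusion $\supseteq$ is immediate. For $\subseteq$, take $S\in\mathsf W$. Both $r(S)$ and $S/\mathcal H_+$ are homomorphic images of $S$, so they lie in $\mathsf W\cap\RR$ and $\mathsf W\cap\AA$ respectively. Then \eqref{eq:decompose} exhibits $S$ as a subalgebra of their product, so $S\in(\mathsf W\cap\RR)\vee(\mathsf W\cap\AA)$.
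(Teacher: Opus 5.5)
Your Steps 1 and 3 are correct and follow the paper's proof. Congruence compatibility comes from distributivity, the identity $3x\approx2x$ in the quotient from $3a=2a+a$ and $2a=3a+a$, and the join from the subdirect embedding. The gap is Step 2. Injectivity is the whole substance of the proposition, and you leave it as a plan.

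Moreover, the intermediate targets you name cannot be derived from your hypotheses. In $\TT$, take $a=b$ to be the element named $s$ in \eqref{eq:T}, and take both of your summands equal to $f$. Then $a=b+f$, $b=a+f$ and $3a=3b$ hold. However, $a+2a=e\neq a$, and $3a+w=e\neq a$ for every $w$. So no derivation of $a=b$ can pass through $a=a+2a$ or $a=3a+w$, because in general $a$ lies strictly below $3a$ in the additive preorder. Your fallback fails for a different reason: $3a=3b$ already forces $2a=2(3a)=2(3b)=2b$, so the doubled image cannot separate $a$ from $b$.

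The missing idea has two parts. First, replace your two summands by one element $c$ with $3c=c$ that moves $a$ to $b$ and back. Second, use $x^2\approx x$ to transfer absorption of $c$ from $2a$ down to $b$.

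\emph{Building $c$.} From $a=a+s+t$ you get $a=a+2s+2t$, hence $a+2t=a+2s+4t=a$. Then $c=3t$ satisfies $3c=c$, $a+c=a+2t+t=b$, and $b+c=a+4t=a$.

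\emph{Absorption in $2a$.} Now $3b=3a+3c=3a+c$, so $3a=3b$ gives $3a+c=3a$. Adding $a$ and using $4a=2a$ gives $2a+c=2a$.

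\emph{The product $p=ac$.} Right multiplication by $c$ gives $2p+c=2p$, since $c^2=c$. Also $3p=a(3c)=p$. Hence $p+c=p+(2p+c)=3p=p$.

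\emph{Conclusion.} Finally $b=(a+c)^2=a+p+ca+c=a+p+ca$, so $b+c=b$, and therefore $a=b+c=b$. This is the paper's argument. Without it or an equivalent computation, injectivity is unproved, and so are the embedding \eqref{eq:decompose} and the join \eqref{eq:join-ra}.
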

\begin{proof}
Define $a\preceq b$ if $a=b$ or $b=a+u$ for some $u\in S$, and let $\mathcal H_+$ be the symmetric part of this preorder. This is the Green relation of the commutative additive semigroup. Every additive translation and every left or right multiplicative translation preserves $\preceq$, by distributivity. This translation invariance makes $\mathcal H_+$ a semiring congruence. As $3a=2a+a$ and $2a=3a+a$, its quotient satisfies $3x\approx2x$.

We claim that $\mathcal H_+\cap\ker r$ is the equality relation. Suppose $a\mathcal H_+b$ and $a\ne b$. Write $b=a+u$ and $a=b+v$. Iterating $a=a+u+v$ gives $a=a+2u+2v$. By $4u=2u$,
\[
 a+2u=a+4u+2v=a+2u+2v=a.
\]
With $c=3u$, we obtain $3c=c$, $b=a+c$, and $a=b+c$. If $3a=3b$, then $3a+c=3a$. Adding $a$ gives $e+c=e$, where $e=2a$. Put $p=ac$. Right multiplication by $c$ yields $2p+c=2p$, and $3p=p$ gives $p+c=p$. Expanding the square gives
\[
 b=(a+c)^2=a+p+ca+c=a+p+ca,
\]
which gives $b+c=b$. Since $a=b+c$, this contradicts $a\ne b$. This proves the claim and the embedding~\eqref{eq:decompose}. Its two coordinates are surjective homomorphisms. The factors are homomorphic images of $S$; applying the decomposition to every member of $\mathsf W$ proves~\eqref{eq:join-ra}.
\end{proof}

\begin{lemma}\label{lem:compact}
Let $\mathsf V$ be finitely based and let $\mathsf V_1,\mathsf V_2$ be subvarieties, each defined relative to $\mathsf V$ by finitely many identities. Suppose that
\[
 \mathsf W=(\mathsf W\cap\mathsf V_1)\vee(\mathsf W\cap\mathsf V_2)
 \quad\text{for every }\mathsf W\subseteq\mathsf V.
\]
If $\mathsf V_1$ and $\mathsf V_2$ are hereditarily finitely based, then so is $\mathsf V$.
\end{lemma}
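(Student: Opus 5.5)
Let $\mathsf W\subseteq\mathsf V$ be an arbitrary subvariety; the goal is a finite basis for it. By hypothesis $\mathsf W\cap\mathsf V_1$ is a subvariety of $\mathsf V_1$, hence finitely based. Fix a finite set $\Sigma_1$ of identities with $\mathsf W\cap\mathsf V_1=\Mod(\Sigma_1)$. Similarly fix a finite $\Sigma_2$ for $\mathsf W\cap\mathsf V_2$. Also fix a finite basis $\Sigma$ of $\mathsf V$ and finite sets $\Delta_1,\Delta_2$ with $\mathsf V_i=\mathsf V\cap\Mod(\Delta_i)$.

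Since $\mathsf W\cap\mathsf V_i=\Mod(\Sigma\cup\Delta_i\cup\Id(\mathsf W))$, the compactness theorem for equational logic lets me choose a finite $\Gamma_i\subseteq\Id(\mathsf W)$ such that
\[
 \Sigma\cup\Delta_i\cup\Gamma_i\vdash\Sigma_i .
\]
This is possible because $\Sigma_i$ is finite and each member of $\Sigma_i$ follows from finitely many premises. Then
\[
 \mathsf W\cap\mathsf V_i=\Mod(\Sigma\cup\Delta_i\cup\Gamma_i),
\]
since the right side lies between $\mathsf W\cap\mathsf V_i$ and $\Mod(\Sigma_i)$.

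Now put $\mathsf W'=\Mod(\Sigma\cup\Gamma_1\cup\Gamma_2)$. This variety is finitely based, contains $\mathsf W$, and is contained in $\mathsf V$. For each $i$,
\[
 \mathsf W'\cap\mathsf V_i=\Mod(\Sigma\cup\Delta_i\cup\Gamma_1\cup\Gamma_2)\subseteq\mathsf W\cap\mathsf V_i\subseteq\mathsf W'\cap\mathsf V_i ,
\]
so $\mathsf W'\cap\mathsf V_i=\mathsf W\cap\mathsf V_i$. Applying the join hypothesis to both $\mathsf W'$ and $\mathsf W$ gives
\[
 \mathsf W'=(\mathsf W'\cap\mathsf V_1)\vee(\mathsf W'\cap\mathsf V_2)=(\mathsf W\cap\mathsf V_1)\vee(\mathsf W\cap\mathsf V_2)=\mathsf W .
\]
Hence $\mathsf W$ is finitely based.

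The only delicate point is the choice of the $\Gamma_i$. We need finitely many identities of $\mathsf W$ itself, not merely identities of $\mathsf W\cap\mathsf V_i$, that cut $\mathsf V_i$ down exactly to $\mathsf W\cap\mathsf V_i$. This is where the finite axiomatizability of $\mathsf V_i$ relative to $\mathsf V$ is used: it makes $\Id(\mathsf W\cap\mathsf V_i)$ the deductive closure of $\Sigma\cup\Delta_i\cup\Id(\mathsf W)$, so compactness applies. The join hypothesis for every subvariety is then used only for $\mathsf W'$ and $\mathsf W$.
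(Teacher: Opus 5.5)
Your proof is correct and is essentially the paper's: compactness gives finite $\Gamma_i\subseteq\Id(\mathsf W)$ with $\Sigma\cup\Delta_i\cup\Gamma_i\models\Sigma_i$, the variety defined by $\Sigma\cup\Gamma_1\cup\Gamma_2$ has the same intersections with $\mathsf V_1,\mathsf V_2$ as $\mathsf W$, and the join hypothesis applied to both varieties forces them to be equal. One small correction to your closing remark: the compactness step does not need $\mathsf V_i$ to be finitely axiomatized relative to $\mathsf V$, since it works equally well with $\Id(\mathsf V_i)$ in place of $\Sigma\cup\Delta_i$, and that hypothesis is automatic anyway once $\mathsf V_i$ is finitely based.
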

\begin{proof}
Fix $\mathsf W\subseteq\mathsf V$ and finite bases $\Sigma_i$ for $\mathsf W\cap\mathsf V_i$. Let $\Theta_i$ define $\mathsf V_i$ relative to $\mathsf V$. Since
\[
 \Id(\mathsf W)\cup\Theta_i\models\Sigma_i,
\]
finiteness of equational proofs gives a finite $\Delta_i\subseteq\Id(\mathsf W)$ with $\Id(\mathsf V)\cup\Delta_i\cup\Theta_i\models\Sigma_i$. Define $\mathsf Z$ by a finite basis of $\mathsf V$ together with $\Delta_1\cup\Delta_2$. These identities give $\mathsf W\subseteq\mathsf Z\subseteq\mathsf V$ and $\mathsf Z\cap\mathsf V_i=\mathsf W\cap\mathsf V_i$ for $i=1,2$. The assumed decomposition, applied to both $\mathsf Z$ and $\mathsf W$, gives $\mathsf Z=\mathsf W$.
\end{proof}

\subsection{The branch \texorpdfstring{$3x\approx x$}{3x = x}}
Let $\BR$ be the variety of Boolean rings in the signature $\{+,\cdot\}$. The identities $\sr$, $x^2\approx x$, $2x\approx2y$, and $3x\approx x$ define this variety. The common value of $2x$ is the zero element of the ring, although it is not a constant symbol.

For $S\in\RR$ and $e\in d(S)$, the set
\[
 G_e=\{a\in S:2a=e\}
\]
is a subsemiring. Its addition is a group of exponent at most two, with identity element $e$, since $a+e=3a=a$ and $a+a=e$. Distributivity and multiplicative idempotence make it a Boolean ring. Its multiplication is commutative: expanding $(a+b)^2=a+b$ and cancelling in the additive group gives $ab+ba=e$ and $ab=ba$.

\begin{lemma}\label{lem:fibre-content}
Let $p,q\in S\in\RR$ and $e\in d(S)$ satisfy $pqp=p$, $qpq=q$, and $2p+e=2q+e=e$. Then $p+e=q+e$.
\end{lemma}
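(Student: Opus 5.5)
The plan is to move $p$ and $q$ into the Boolean ring $G_e$ by adding $e$, and then use commutativity of $G_e$. Put $p'=p+e$ and $q'=q+e$. Since $e\in d(S)$ is a fixed point of $d$, we have $2e=e$. Hence $2p'=2p+2e=2p+e=e$, and likewise $2q'=e$. So $p',q'\in G_e$. By the discussion preceding the lemma, $G_e$ is a commutative Boolean ring with additive identity $e$. The goal $p'=q'$ is then a ring-theoretic consequence of $p'q'p'=p'$ and $q'p'q'=q'$. Commutativity gives $p'=p'q'p'=p'q'$ and $q'=q'p'q'=p'q'$, so $p'=q'$.

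The key auxiliary step is that $e$ acts as the zero of $G_e$ multiplicatively, together with the resulting absorption identities in $S$.
\begin{itemize}
\item For $a\in G_e$, the products $ea$ and $ae$ again lie in $G_e$, because $2(ea)=e(2a)=e^2=e$, and similarly on the right.
\item In any ring the additive identity annihilates, so $ea=ae=e$ for all $a\in G_e$.
\end{itemize}
Applying this to $a=p'$, $a=q'$ and to products of them gives identities in $S$ of the form
\[
 pe+e=e,\qquad ep+e=e,\qquad qe+e=e,\qquad eq+e=e,
\]
and more generally $ew+e=e$ whenever $w+e\in G_e$. In particular, once $qp+e\in G_e$ is known, we get $e(qp+e)=e$, that is, $eqp+e=e$.

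Next, expand $p'q'p'$ in two stages, absorbing every term containing $e$.
\begin{itemize}
\item First, $q'p'=(q+e)(p+e)=qp+qe+ep+e^2$. Using $qe+e=e$, $ep+e=e$ and $e^2=e$, this equals $qp+e$. This element lies in $G_e$, being a product in $G_e$.
\item Then $p'(qp+e)=pqp+pe+eqp+e$. Using $pe+e=e$ and $eqp+e=e$, this equals $pqp+e=p+e=p'$, by the hypothesis $pqp=p$.
\end{itemize}
Symmetrically, $q'p'q'=q'$, and the ring argument of the first paragraph finishes the proof.

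The main obstacle is the absorption of the mixed terms containing $e$. A direct attempt using only $2p+e=e$ and $3x\approx x$ goes in circles. The decisive point is to recognise $e$ as the ring zero of $G_e$, and to check that each relevant product, such as $qp+e$, lies in $G_e$ before the annihilation is applied. Only then are the identities $ew+e=e$ for the needed $w$ available.
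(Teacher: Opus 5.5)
Your proof is correct. Its skeleton matches the paper's. You shift $p,q$ into $G_e$ by adding $e$, show $\hat p\hat q\hat p=\hat p$ and $\hat q\hat p\hat q=\hat q$ by absorbing every term that contains $e$, and finish with commutativity and idempotence in the Boolean ring $G_e$.

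The difference lies only in how the mixed terms are absorbed.

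\emph{The paper's route.} It expands $(p+e)(q+e)(p+e)$ in one step and works in the additive order of the doubly idempotent image $d(S)$. A mixed term $t$ contains the factor $e=2e$, so $t=2t$. Hence each factor $p,q$ of $t$ may be replaced by $2p,2q$, both of which are $\le e$. Monotonicity of multiplication then gives $t\le e^3=e$.

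\emph{Your route.} You stay inside $G_e$ and use that its ring zero annihilates it; more directly, $ea=(2a)a=2a=e$ for $a\in G_e$. This only absorbs terms of the form $e\cdot(\text{element of }G_e)$. You therefore have to stage the expansion: first compute $q'p'=qp+e$, then use $qp+e\in G_e$ to obtain $eqp+e=e$.

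\emph{Comparison.} Your argument needs only the ring structure of $G_e$ and avoids the order and monotonicity on $d(S)$. The paper's argument handles all mixed terms of the full triple product uniformly, without checking that intermediate products lie in $G_e$.
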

\begin{proof}
Both $\widehat p=p+e$ and $\widehat q=q+e$ belong to $G_e$, since $2e=e$ and $2p+e=2q+e=e$. Expand $(p+e)(q+e)(p+e)$. The term containing no $e$ is $pqp=p$.

Let $t$ be any remaining product term. The factor $e$ gives $2t=t$. Replacing any factor $p$ or $q$ in this term by $2p$ or $2q$ leaves its value unchanged. All its factors belong to $d(S)$ and are at most $e$ in the additive order, because $2p+e=2q+e=e$. Monotonicity of multiplication on $d(S)$ gives $t\le e^3=e$. Since the expansion contains the term $e^3=e$ itself, the sum of all terms containing $e$ is exactly $e$, and
\[
 \widehat p\widehat q\widehat p=p+e=\widehat p.
\]
The same argument gives $\widehat q\widehat p\widehat q=\widehat q$. In the Boolean ring $G_e$, commutativity and idempotence of multiplication make both left sides equal $\widehat p\widehat q$. This proves $\widehat p=\widehat q$.
\end{proof}

For a polynomial $P$, define $\pi(P)$ to be the set of nonempty contents occurring an odd number of times among its word occurrences. In Boolean rings, a word depends only on its content, and distinct square-free polynomials over the two-element Boolean ring define distinct functions. This uniqueness gives the criterion
\begin{equation}\label{eq:parity}
 \BR\models P\approx Q\quad\Longleftrightarrow\quad\pi(P)=\pi(Q).
\end{equation}
The empty parity set records a zero value; it does not denote an empty sum term.

\begin{proposition}\label{prop:r}
The variety $\RR$ is hereditarily finitely based. More precisely, if $S\in\RR\setminus\IB$, then
\begin{equation}\label{eq:r-join}
 \Var(S)=\Var(d(S))\vee\BR.
\end{equation}
\end{proposition}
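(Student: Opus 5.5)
The plan is to prove \eqref{eq:r-join} by comparing identities, and then to deduce hereditary finite basability from it together with statement (i) for $\IB$.

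\emph{Inclusion $\supseteq$ in \eqref{eq:r-join}.} By Lemma~\ref{lem:retractions}, $d(S)$ is a homomorphic image of $S$, so $d(S)\in\Var(S)$. Since $S\notin\IB$, there is $a\in S$ with $2a\ne a$. Put $e=2a$. Then $a\in G_e$ and $a\ne e$, so $G_e$ is a nontrivial Boolean ring and a subsemiring of $S$. It therefore generates $\BR$, and hence $\BR\subseteq\Var(S)$.

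\emph{Inclusion $\subseteq$ in \eqref{eq:r-join}.} Let $P\approx Q$ hold in both $d(S)$ and $\BR$, and fix an evaluation in $S$ with values $p=P(s)$ and $q=Q(s)$. The plan has four steps.
\begin{enumerate}[label=(\arabic*),leftmargin=2.4em]
\item Since $d$ is a homomorphism and $d(S)\models P\approx Q$, we get $2p=2q=:e$, and $e\in d(S)$.
\item For every word occurrence $w$ of $P$, the element $2p$ is a sum in the idempotent reduct $d(S)$ containing the summand $2w$. Hence $2w+e=e$, and likewise for every occurrence in $Q$.
\item By $3x\approx x$, we have $p=p+e$. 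So $p=e+\sum_w(w+e)$, summed over all occurrences $w$ of $P$. Each $w+e$ lies in $G_e$, and $G_e$ is closed under addition, so this is a sum computed in the group $(G_e,+)$.
\item If two occurrences $w,w'$ (in $P$ or in $Q$) have the same content, Lemma~\ref{lem:deletion} gives $ww'w=w$ and $w'ww'=w'$ in $S$. Together with step (2), Lemma~\ref{lem:fibre-content} then gives $w+e=w'+e$. Thus $w+e$ depends only on $\ct(w)$; write $g_C$ for this common value when $C=\ct(w)$. Since $G_e$ has exponent two, $p=e+\sum_{C\in\pi(P)}g_C$, where the empty sum means $p=e$.
\end{enumerate}
The same computation gives $q=e+\sum_{C\in\pi(Q)}g_C$. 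The criterion \eqref{eq:parity} gives $\pi(P)=\pi(Q)$, so $p=q$. Therefore $\Id(d(S))\cap\Id(\BR)\subseteq\Id(S)$, which is the inclusion $\Var(S)\subseteq\Var(d(S))\vee\BR$.

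\emph{Hereditary finite basability.} Let $\mathsf W\subseteq\RR$. If $\mathsf W\subseteq\IB$, statement (i) applies directly. Otherwise take a finite basis $\Sigma$ of $\mathsf W\cap\IB$. As in Lemma~\ref{lem:compact}, choose a finite $\Delta\subseteq\Id(\mathsf W)$ such that $\sr$, $x^2\approx x$, $3x\approx x$, $2x\approx x$ and $\Delta$ together imply $\Sigma$. Let $\mathsf Z$ be defined by $\sr$, $x^2\approx x$, $3x\approx x$ and $\Delta$. Then $\mathsf Z\cap\IB=\mathsf W\cap\IB$, and $\BR\subseteq\mathsf W\subseteq\mathsf Z$, because $\mathsf W$ contains some $S\notin\IB$ and \eqref{eq:r-join} applies to it. Every $S\in\mathsf Z$ lies in $\mathsf Z\cap\IB$ or, by \eqref{eq:r-join} and Lemma~\ref{lem:retractions}, in $(\mathsf Z\cap\IB)\vee\BR$. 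Hence
\[
 \mathsf Z\subseteq(\mathsf W\cap\IB)\vee\BR\subseteq\mathsf W,
\]
so $\mathsf W=\mathsf Z$ is finitely based.

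The main obstacle is step (4): justifying that all occurrences with a given content collapse to a single element of $G_e$ after adding $e$. This is exactly where Lemma~\ref{lem:fibre-content} is needed, and its hypothesis $2w+e=e$ is supplied by step (2).
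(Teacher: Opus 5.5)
Your proof of \eqref{eq:r-join} is correct and follows the paper's argument step for step. A nontrivial fibre gives $\BR\subseteq\Var(S)$; the paper uses just $\{2a,a\}$ where you use all of $G_e$. For the reverse inclusion you set $e=2P=2Q$, replace each word value $w$ by $w+e\in G_e$, collapse occurrences of equal content with Lemmas~\ref{lem:deletion} and~\ref{lem:fibre-content}, and finish with \eqref{eq:parity}.

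The deduction of hereditary finite basability is also correct, but you organise it differently from the paper.
\begin{itemize}
\item \emph{Your route.} You run the compactness argument of Lemma~\ref{lem:compact}. You extract a finite $\Delta\subseteq\Id(\mathsf W)$ that, together with $2x\approx x$, implies a basis of $\mathsf W\cap\IB$. Then \eqref{eq:r-join} and Lemma~\ref{lem:retractions} show that the variety $\mathsf Z$ defined by $\Delta$ equals $\mathsf W$.
\item \emph{The paper's route.} The paper writes the basis down explicitly. Let $\mathsf U=\mathsf W\cap\IB$ have relative basis $\Sigma_{\mathsf U}$ in $\IB$. Then $\mathsf U\vee\BR$ is defined by $\sr$, $x^2\approx x$, $3x\approx x$, and the doubled identities $2P\approx2Q$ for $(P\approx Q)\in\Sigma_{\mathsf U}$. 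These identities hold in $\BR$ because $2x\approx2y$ there. Any model $S$ has $d(S)\in\mathsf U$ because $d$ is a homomorphism, and \eqref{eq:r-join} finishes the argument.
\end{itemize}

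The paper's route gives an explicit basis depending only on $\mathsf U$. It also shows directly that every subvariety of $\RR$ not contained in $\IB$ has the form $\mathsf U\vee\BR$. Your route needs no check that the doubled identities hold in $\BR$, but it yields only the existence of some finite $\Delta$. As a byproduct, it also establishes $\mathsf W=(\mathsf W\cap\IB)\vee\BR$.
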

\begin{proof}
Choose $a\ne2a$. The set $\{2a,a\}$ is a two-element Boolean ring reduct. This subsemiring gives $\BR\subseteq\Var(S)$. The algebra $d(S)$ is a homomorphic image of $S$. To prove the reverse inclusion, suppose $P\approx Q$ holds in both $d(S)$ and $\BR$. Under any evaluation in $S$, put $e=2P=2Q$. Each word value $w$ occurring in either polynomial satisfies $2w+e=e$. Replacing it by $w+e$ leaves the value of its polynomial unchanged, because $P+e=P$ and $Q+e=Q$. All new summands lie in $G_e$. Words with the same content agree there by Lemmas~\ref{lem:deletion} and~\ref{lem:fibre-content}; their sum depends only on content parity. Equation~\eqref{eq:parity} gives $P=Q$ and proves~\eqref{eq:r-join}.

Every subvariety of $\RR$ is either a subvariety $\mathsf U$ of $\IB$ or $\mathsf U\vee\BR$ for such a $\mathsf U$. Subvarieties of the first kind are finitely based by the finite basis theorem for $\IB$~\cite{GPZ,Pastijn}. For the second kind, choose a finite relative basis $\Sigma_{\mathsf U}$ in $\IB$. The identities
\[
 \sr,\quad x^2\approx x,\quad3x\approx x,\quad
 \{2P\approx2Q:(P\approx Q)\in\Sigma_{\mathsf U}\}
\]
define $\mathsf U\vee\BR$: their models have doubled image in $\mathsf U$, and~\eqref{eq:r-join} gives the required inclusion. The reverse inclusion follows by evaluation.
\end{proof}

\subsection{The branch in which every sum is idempotent}
Put
\[
 \JJ=\AA\cap\Mod(2(x+y)\approx x+y),\qquad
 \KK=\MI\cap\Mod(x+y\approx z+t).
\]
The latter is the variety of constant-addition band semirings. The common sum is a multiplicative absorbing element, by distributivity. Every term containing addition has that same value.

\begin{lemma}\label{lem:kzero}
The variety $\KK$ is hereditarily finitely based.
\end{lemma}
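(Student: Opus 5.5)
The plan is to reduce the subvariety lattice of $\KK$ to that of $\BB$ and then invoke the finite basis theorem for bands~\cite{GP}. A member $S$ of $\KK$ is a band $M=(S,\cdot)$ with an absorbing element $0$, and addition is the constant map $a+b=0$. Conversely, any band with an adjoined or existing zero, equipped with constant addition equal to that zero, lies in $\KK$.

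First I would describe the identities of $\KK$. Any term containing addition evaluates to $0$. The word identities $u\approx v$ that hold here are therefore those holding in the band reducts. An identity $u\approx P$, where $P$ is a genuine sum, forces $u$ to take the value $0$, so it holds only in trivial cases. The interesting sub-case is when $\ct(v)\not\subseteq\ct(u)$ or similar. For instance, $u\approx 0$-type identities arise as $u\approx x+y$. Every identity of a subvariety $\mathsf W\subseteq\KK$ therefore has one of three forms:
\begin{itemize}
\item[(a)] $P\approx Q$ with both sides sums, which holds throughout $\KK$;
\item[(b)] a word identity $u\approx v$;
\item[(c)] an identity $u\approx P$ with $P$ a sum, equivalent modulo $\KK$ to $u\approx x+y$ with $x,y$ fresh.
\end{itemize}

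Next, set $\mathsf W_0=\{(M,\cdot):M\in\mathsf W\}$ and let $\Var(\mathsf W_0)$ be the band variety it generates. Its band identities are exactly the type (b) identities of $\mathsf W$, and by~\cite{GP} they have a finite basis $\Sigma$. The type (c) identities, if any occur, form a set of words $U$ closed under substitution and under multiplication by words on either side, since $0$ is absorbing.

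The main obstacle is showing that $U$ is finitely generated modulo $\Sigma$, meaning that it is generated by finitely many identities $u\approx x+y$ together with the band identities. My approach would use the observation that if $u\in U$ and $\ct(u)=X$, then every word $w$ in which $u$ occurs as a substitution instance of a factor is also in $U$. Since $\mathsf W$ is locally finite (Lemma~\ref{lem:localfinite}), it suffices to find a bound $n$ such that $U$ is generated by its members on at most $n$ variables. I would take $n$ minimal such that $U$ contains some word $u_0$ with $|\ct(u_0)|=n$. I would then show that all of $U$ follows from the finitely many elements of $U$ on $n+1$ variables, using the deletion rule of Lemma~\ref{lem:deletion} to push long words into factor-instances. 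If that direct argument stalls, the fallback is to view $\mathsf W$ as generated by an $S$ in which $\{0\}\cup(S\setminus\{0\})$ forms a band ideal structure. Under that view $U$ consists exactly of the words that vanish in the free object, and the set of such words is an ideal of the free band over $\Var(\mathsf W_0)$ that is fully invariant. I would then argue via well-quasi-ordering of fully invariant ideals of relatively free bands, which follows from finiteness of the finitely generated free bands and the finite basis property of the expanded band variety with zero. Concretely, I would encode the adjoined zero as a band $M^0$ and apply~\cite{GP} to $\Var(M^0)$, noting that $u\approx x+y$ corresponds to the band identity $u\approx uzu$ with $z$ fresh restricted to $\ct$-expanding. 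I would combine $\Sigma$, the basis of $\KK$, and these finitely many identities, and then verify by the classification (a)--(c) that they define $\mathsf W$.
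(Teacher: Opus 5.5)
Your split into types (a)--(c) and your handling of (a) and (b) match the paper's argument. The gap is in (c), the step you call the main obstacle, which you never prove. The reduction of $U$ to words on $n+1$ variables via a minimal $n$ is not carried out. The claimed well-quasi-ordering of fully invariant ideals of relatively free bands does not follow from finiteness of finitely generated free bands; local finiteness alone never yields finite basability. The proposed encoding of $u\approx x+y$ as a band identity $u\approx uzu$ over $M^0$ is not justified. As written, the proposal does not show that the identities of $\mathsf W$ follow from finitely many of them.

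The missing idea is that this obstacle is vacuous. Suppose $\mathsf W\models u\approx x+y$ for a word $u$, with $x,y$ fresh. Substitute a single new variable $z$ for every variable of $u$. By $x^2\approx x$ the left side becomes $z$, so $\mathsf W\models z\approx x+y$. Then every element equals the absorbing common sum, and $\mathsf W$ is trivial. The paper argues equivalently: in a nontrivial member, choose $a$ different from the absorbing element and assign $a$ to every variable. A word then takes the value $a$, while any term containing $+$ takes the absorbing value. You even note that (c) ``holds only in trivial cases,'' but then proceed as if $U$ could be nonempty. Once you record $U=\emptyset$ for nontrivial $\mathsf W$, the axioms of $\KK$ together with a finite band basis $\Sigma$ of $\Var(\mathsf W_0)$ define $\mathsf W$, and $x\approx y$ defines the trivial subvariety. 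No local finiteness or well-quasi-ordering argument is needed.
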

\begin{proof}
Let $\mathsf C\subseteq\KK$ be nontrivial, and let $\mathsf U$ be the band variety generated by its multiplicative reducts. Choose a finite band basis for $\mathsf U$. Together with the axioms of $\KK$ it defines $\mathsf C$: identities between words are precisely those of $\mathsf U$, and identities between terms containing addition hold because both sides have the common sum as their value. No identity between a word and a term containing addition can hold in a nontrivial member, since assigning all variables a nonabsorbing element separates them. The identity $x\approx y$ defines the trivial variety.
\end{proof}

\begin{proposition}\label{prop:j}
The variety $\JJ$ is hereditarily finitely based.
\end{proposition}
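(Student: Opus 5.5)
The plan is to split $\JJ$ into the doubly idempotent part and the constant-addition part, and then apply Lemma~\ref{lem:compact} with $\mathsf V=\JJ$, $\mathsf V_1=\IB$ and $\mathsf V_2=\KK$.

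\emph{Finiteness conditions.} The variety $\JJ$ is finitely based, by $\sr$, $x^2\approx x$, $3x\approx2x$ and $2(x+y)\approx x+y$. We have $\IB\subseteq\JJ$, since $2x\approx x$ implies the other two identities. We also have $\KK\subseteq\JJ$: in $\KK$ every sum takes the common absorbing value, so $3x\approx2x$ and $2(x+y)\approx x+y$ hold. Relative to $\JJ$, the variety $\IB$ is defined by the single identity $2x\approx x$, and $\KK$ by the single identity $x+y\approx z+t$. Both are hereditarily finitely based: $\IB$ by statement (i), and $\KK$ by Lemma~\ref{lem:kzero}.

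\emph{Decomposition.} The remaining task is to show $\mathsf W=(\mathsf W\cap\IB)\vee(\mathsf W\cap\KK)$ for every $\mathsf W\subseteq\JJ$. Fix $S\in\JJ$ and put $D=d(S)$, the set of elements with $2a=a$.
\begin{itemize}
\item Every sum lies in $D$, because $2(x+y)\approx x+y$.
\item $D$ is a multiplicative ideal, since $a(2b)=2ab$ and $(2b)a=2ba$.
\item $D$ is closed under addition.
\end{itemize}
Hence the Rees-type relation $\theta=(D\times D)\cup\Delta_S$ is a semiring congruence. Its addition compatibility is automatic because every sum lands in $D$. The quotient $S/\theta$ has constant addition, with value the class $D$, and it is multiplicatively idempotent. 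So $S/\theta\in\KK$; if $D=S$ the quotient is trivial.

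Next consider the map $a\mapsto(2a,[a]_\theta)$ into $d(S)\times S/\theta$. By Lemma~\ref{lem:retractions} both coordinates are surjective homomorphisms. For injectivity, suppose $2a=2b$ and $a\,\theta\,b$. If $a=b$ there is nothing to prove. Otherwise $a,b\in D$, so $a=2a=2b=b$. Thus $S$ is a subdirect product of $d(S)\in\mathsf W\cap\IB$ and $S/\theta\in\mathsf W\cap\KK$, both being homomorphic images of $S$. This gives the join decomposition, and Lemma~\ref{lem:compact} then finishes the proof.

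\emph{Main obstacle.} The step needing the most care is verifying that $\theta$ is a congruence. The point is that the identity $2(x+y)\approx x+y$ forces every sum into the ideal $D$; everything else is direct bookkeeping.
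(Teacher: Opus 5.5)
Your proposal is correct and follows essentially the same route as the paper. You use the same subdirect embedding $s\mapsto(2s,[s]_{\rho_D})$ into $d(S)\times S/\rho_D$, where $\rho_D$ collapses the ideal $D$, to get the join decomposition $\mathsf W=(\mathsf W\cap\IB)\vee(\mathsf W\cap\KK)$, and then apply Lemma~\ref{lem:compact}. You also check explicitly that $\IB$ and $\KK$ lie in $\JJ$ and are each defined relative to it by a single identity, hypotheses the paper leaves implicit.
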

\begin{proof}
For $S\in\JJ$, let $D=d(S)$. Every sum belongs to $D$, and $D$ is a multiplicative ideal since $2(es)=(2e)s=es$ and $2(se)=s(2e)=se$ for $e\in D$. The relation $\rho_D$ which collapses $D$ to one class and leaves other elements separate is a semiring congruence. Its quotient has constant addition. The map
\[
 S\longrightarrow d(S)\times S/\rho_D,\qquad s\longmapsto(2s,[s]_{\rho_D})
\]
is a subdirect embedding: the first coordinate separates any two elements in the collapsed class, and the second separates any pair not both in $D$. Applying this embedding to every $\mathsf W\subseteq\JJ$ gives
\[
 \mathsf W=(\mathsf W\cap\IB)\vee(\mathsf W\cap\KK).
\]
Apply Lemma~\ref{lem:compact}, Lemma~\ref{lem:kzero}, and the hereditary finite basability of $\IB$.
\end{proof}

\section{Absorption and polynomial normal forms}\label{sec:normal}

For the subvarieties of $\AA$ outside $\JJ$, we need normal forms for polynomial identities. The additive identity $3x\approx2x$ lets us remove absorbed summands one at a time without cancellation.

\begin{lemma}\label{lem:absorption}
The following assertions hold in $\AA$.
\begin{enumerate}[label=(\roman*),leftmargin=2.4em]
\item If $s+a+b=s$, then $s+a=s$ and $s+b=s$.
\item The identity $x+y+xy\approx x+y$ holds.
\item For each element $s$, the set $I_s=\{a:s+a=s\}$ is closed under addition and multiplication. If $a\in I_s$, then $sa,as\in I_s$.
\end{enumerate}
\end{lemma}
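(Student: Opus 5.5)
The plan is to derive all three assertions directly from the two defining features of $\AA$: the additive identity $3x\approx2x$, and the expansion of the square $(x+y)^2\approx x+y$ coming from $x^2\approx x$. No earlier structural result beyond the axioms is needed. I would prove (i) first, then use (i) to get (ii), and then use (ii) to get (iii).

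For (i), suppose $s+a+b=s$. Substituting this equation into itself once gives
\[
s=s+a+b=(s+a+b)+a+b=s+2a+2b.
\]
Now add $a$:
\[
s+a=s+3a+2b=s+2a+2b=s,
\]
where the middle step uses $3a=2a$. The same computation with $a$ and $b$ interchanged gives $s+b=s$. This is the only place where the absorption $3x\approx2x$ really matters: it lets us remove a single summand without any cancellation.

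For (ii), put $s=x+y$ and expand $s=s^2$, which gives
\[
x+y=x+xy+yx+y.
\]
This says $s+xy+yx=s$, so (i), applied with $a=xy$ and $b=yx$, gives $s+xy=s$. That is exactly $x+y+xy\approx x+y$.

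For (iii), take $a,b\in I_s$.
\begin{itemize}[leftmargin=2.4em]
\item Closure under addition is immediate: $s+(a+b)=(s+a)+b=s+b=s$.
\item Closure under multiplication: from $s=s+a+b$ and (ii) we get
\[
s+ab=s+a+b+ab=s+a+b=s.
\]
\item For $sa$: apply (ii) with $x=s$, $y=a$. This gives $s+a+sa=s+a$, and since $s+a=s$ the left side is $s+sa$, so $s+sa=s$.
\item For $as$: apply (ii) with $x=a$, $y=s$. This gives $a+s+as=a+s$, and the same substitution yields $s+as=s$.
\end{itemize}

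I expect no genuine obstacle. The only delicate point is in (i): one must double the summands before absorbing $a$, since cancellation is unavailable in $\AA$.
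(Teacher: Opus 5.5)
Your proof is correct and follows the paper's argument for (i) and (ii) essentially verbatim. In (iii) you derive $ab,sa,as\in I_s$ directly from (ii), whereas the paper expands $s=(s+a)(s+b)$ and applies (i), and multiplies $s+a=s$ by $s$ on either side; both routes are equally elementary.
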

\begin{proof}
If $s+a+b=s$, iteration gives $s+2a+2b=s$, and
\[
 s+a=s+3a+2b=s+2a+2b=s.
\]
This proves (i). Expanding $(x+y)^2=x+y$ gives
\[
 x+y+xy+yx=x+y,
\]
and (i) gives (ii). For (iii), associativity gives additive closure. If $a,b\in I_s$, then
\[
 s=(s+a)(s+b)=s+sb+as+ab,
\]
and (i) gives $ab\in I_s$. Multiplying $s+a=s$ on either side by $s$ proves the remaining assertions.
\end{proof}

Substituting $y=x$ in assertion (ii) gives $3x\approx2x$. This substitution proves that the absorption identity characterizes $\AA$ inside $\MI$.

\subsection{A three-element obstruction}
Let $\TT$ be the three-element semiring with the following tables:
\begin{equation}\label{eq:T}
\begin{array}{c|ccc}
 +&e&s&f\\\hline
 e&e&e&e\\
 s&e&e&s\\
 f&e&s&f
\end{array}
\qquad\qquad
\begin{array}{c|ccc}
 \cdot&e&s&f\\\hline
 e&e&e&f\\
 s&e&s&f\\
 f&f&f&f
\end{array}
\end{equation}
The symbols $e,s,f$ are element names. The tables give $\TT\in\AA\setminus\JJ$. Chajda and L\"anger introduced the semiring $T_3$ in~\cite[Definition~4.1]{CLSI} and studied it in~\cite{CLV}; $\TT$ is its $\{+,\cdot\}$-reduct. In the notation $T_3=\{0,a,1\}$ of~\cite{CLV}, the following map is an isomorphism:
\[
 e\longmapsto a,\qquad s\longmapsto1,\qquad f\longmapsto0.
\]
The symbols in these images name elements of the cited algebra; they do not add constants to the present signature.

\begin{proposition}\label{prop:forcedT}
Every $S\in\AA\setminus\JJ$ contains a subsemiring isomorphic to $\TT$. A subvariety of $\AA$ which does not contain $\TT$ is finitely based.
\end{proposition}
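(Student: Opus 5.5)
The first assertion comes down to constructing a copy of $\TT$ by hand. The second then follows from the first together with Proposition~\ref{prop:j}. Let $\mathsf W\subseteq\AA$ be a subvariety with $\TT\notin\mathsf W$. If some $S\in\mathsf W$ lay outside $\JJ$, the first assertion would give a subsemiring of $S$ isomorphic to $\TT$, which would put $\TT\in\mathsf W$. Hence $\mathsf W\subseteq\JJ$, and Proposition~\ref{prop:j} shows that $\mathsf W$ is finitely based. So all the work is in the first assertion.

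Reading the tables~\eqref{eq:T}, I have to find three elements $s,e,f$ with the following properties:
\begin{itemize}[leftmargin=2.4em]
\item $e=2s\ne s$ and $s^2=s$;
\item $f=2f$, $s+f=s$, and $f$ is multiplicatively absorbing on $\{e,s,f\}$;
\item $f\ne e$.
\end{itemize}
The remaining entries then follow from these properties. For example, $e+f=2s+2f=2(s+f)=e$ and $se=es=2s=e$. Also $f\ne s$ holds automatically, since $2f=f$ while $2s\ne s$.

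Since $S\notin\JJ$, there are $x,y$ with $c=x+y$ and $2c\ne c$. By Lemma~\ref{lem:absorption}(ii), $xy\in I_c$. By Lemma~\ref{lem:absorption}(iii) and additive closure of $I_c$, the element $f_0=2cxyc$ lies in $I_c$, satisfies $2f_0=f_0$, and satisfies $cf_0=f_0c=f_0$ by idempotence of $c$. Then $(2c)f_0=2f_0=f_0=f_0(2c)$, and $f_0^2$ need not equal $f_0$ but is certainly an idempotent since $S\in\MI$. So taking $s=c$, $f=f_0$ satisfies every requirement except possibly $f\ne 2c$.

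The main obstacle is this degenerate case $2cxyc=2c$, and there I would change the choice of $s$. By Lemma~\ref{lem:localfinite}, the subsemiring $F$ generated by $x,y$ is finite. Among elements $s\in F$ with $2s\ne s$, I would choose one minimal in the additive preorder $\preceq$ of Proposition~\ref{prop:decompose}. Then I would set
\[
 f=s\,g\,s,\qquad g=\sum\bigl(I_s\cap d(F)\bigr),
\]
which lies in $I_s\cap d(F)$ by Lemma~\ref{lem:absorption}(iii). This needs $I_s\cap d(F)\ne\emptyset$, which I would obtain by writing $s$ as a sum of two elements and reusing the $2cxyc$ construction; a one-summand $s$ should be handled by passing to $c$ itself. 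Because $g$ is the largest element of $I_s\cap d(F)$, absorption gives $fs=sf=f$ and $f^2=f$.

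It then remains to exclude $f=2s$. Suppose $2s\in I_s$. Then $s+2s=s$, so $3s=s$; together with $3s=2s$ this forces $2s=s$, a contradiction. Hence $2s\notin I_s$, while $f\in I_s$, so $f\ne 2s$. This argument is the one I would try first. The delicate point is verifying that $I_s\cap d(F)$ is nonempty for the minimal $s$. If that fails, I would fall back on working in the quotient by $\mathcal H_+$-type congruences to force a summand decomposition of $s$.
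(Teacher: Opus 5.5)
Your argument is correct and is in substance the paper's proof. In a band, $(x+y)\,xy\,(x+y)=xyx+xy+yx+yxy$, so your $f_0=2cxyc$ is literally the paper's $f=2ab+2ba+2aba+2bab$ (with $a=x$, $b=y$), and writing it as $2cxyc$ makes $cf_0=f_0c=f_0$ immediate from $c^2=c$ instead of by expansion.

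The one thing to fix is that the ``degenerate case'' $2cxyc=2c$ cannot occur. Your own closing argument, applied directly to $s=c$ and $f=f_0\in I_c$, already gives $f_0\ne 2c$, and this is exactly how the paper excludes $e=f$. So the detour through a $\preceq$-minimal $s$ and $g=\sum(I_s\cap d(F))$, whose nonemptiness you leave open, is unnecessary and should be deleted. Also, contrary to your aside, $f_0^2=f_0$ holds outright in $\MI$.
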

\begin{proof}
Choose $a,b\in S$ with $s=a+b\ne2s$, and put
\[
 e=2s,\qquad f=2ab+2ba+2aba+2bab.
\]
By Lemma~\ref{lem:absorption}, $s$ absorbs $ab$ and $ba$ and their products $aba$ and $bab$. These absorptions give $s+f=s$. Expansion using band identities gives
\[
\begin{aligned}
 sf&=2(ab+aba+aba+ab+bab+ba+ba+bab)=f,\\
 fs&=2(aba+ab+ba+bab+aba+ab+ba+bab)=f.
\end{aligned}
\]
We have $es=se=e$, $ef=fe=f$, $s+e=e$, $e+f=e$, and $2f=f$. Since $s\ne2s$, it is distinct from $e$ and $f$. If $e=f$, the equations $s+f=s$ and $s+e=e$ imply $s=e$, a contradiction. The resulting tables on $\{e,s,f\}$ are~\eqref{eq:T}. The final assertion follows from Proposition~\ref{prop:j}.
\end{proof}

In $\TT$, taking $x=y=s$ gives $x+2xyx=3s=e\ne s$. This evaluation proves that the absorption identity required in~\cite{WS} at $n=2$ does not follow from multiplicative idempotence.

For a polynomial $P$ on a finite alphabet $X$ and a subset $D\subseteq X$, define
\begin{equation}\label{eq:nu}
 \nu_P(D)=\min\bigl(2,\#\{\text{occurrences }w\text{ in }P:\ct(w)\subseteq D\}\bigr).
\end{equation}
An occurrence $u$ in $P$ is \emph{isolated} if $\nu_P(\ct(u))=1$. We call its content an isolated content of $P$ and denote the set of these contents by $\Iso(P)$. To distinguish reduced polynomials, Chajda and L\"anger~\cite[Theorem~4.1]{CLV} used assignments to $0$ and $1$. The evaluations below adapt those assignments to the constant-free signature. The truncated count records how many occurrences have content in a given set, up to a maximum of two, for a possibly noncommutative polynomial.

\begin{lemma}\label{lem:count}
If $\TT\models P\approx Q$, then $\nu_P(D)=\nu_Q(D)$ for every $D\subseteq\ct(P)\cup\ct(Q)$. In this case $\Iso(P)=\Iso(Q)$, and for each common isolated content $D$, the unique occurrences supported in $D$ have content exactly $D$.
\end{lemma}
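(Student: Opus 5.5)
The plan is to read off $\nu_P(D)$ from a single evaluation into $\TT$, modelled on the $0$/$1$ assignments of Chajda and L\"anger. Fix $D\subseteq\ct(P)\cup\ct(Q)$ and assign $s$ to every variable in $D$ and $f$ to every other variable. Variables outside $\ct(P)\cup\ct(Q)$ play no role.

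By the tables~\eqref{eq:T}:
\begin{itemize}
\item $s\cdot s=s$ and $f$ is multiplicatively absorbing, so a word evaluates to $s$ if its content is contained in $D$, and to $f$ otherwise.
\item $f$ is an additive identity ($f+e=e$, $f+s=s$, $f+f=f$), $s+s=e$, and $e$ is additively absorbing.
\end{itemize}
Hence the value of $P$ is $f$, $s$ or $e$ according as the number of occurrences supported in $D$ is $0$, $1$ or at least $2$, that is, according as $\nu_P(D)=0,1,2$. The same holds for $Q$. Since $e,s,f$ are distinct, $\TT\models P\approx Q$ forces $\nu_P(D)=\nu_Q(D)$. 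If $D=\emptyset$, both counts are $0$ because words are nonempty, so that case needs no evaluation.

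For the second assertion, let $u$ be an isolated occurrence of $P$ with content $D$. Then $\nu_Q(D)=\nu_P(D)=1$, so $Q$ has a unique occurrence $v$ supported in $D$. Suppose $D'=\ct(v)\subsetneq D$. Then $D'$ is nonempty, and $v$ is the only occurrence of $Q$ supported in $D'$, so $\nu_Q(D')=1$. In $P$, any occurrence supported in $D'$ is supported in $D$, hence equals $u$. But $\ct(u)=D\not\subseteq D'$, so $\nu_P(D')=0$, contradicting the first part. Thus $\ct(v)=D$ and $D\in\Iso(Q)$. By symmetry $\Iso(P)=\Iso(Q)$, and for each common isolated content $D$ the unique occurrences supported in $D$ in both polynomials have content exactly $D$.

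The only delicate point is the table check that $f$ is simultaneously an additive identity and a multiplicative zero; the rest is bookkeeping. I expect no serious obstacle.
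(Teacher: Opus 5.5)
Your proposal is correct and essentially the paper's proof: the same evaluation (variables of $D$ to $s$, all others to $f$) reads off $\nu_P(D)$, and your contradiction via $D'=\ct(v)\subsetneq D$ is just the contrapositive of the paper's direct argument that $\nu_Q(\ct(v))=1$ forces $u$ to be supported in $\ct(v)$. Your separate treatment of $D=\emptyset$ is a harmless extra check.
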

\begin{proof}
Assign the variables in $D$ the value $s$ and all remaining variables the value $f$. A word takes the value $s$ exactly when its content is contained in $D$; otherwise its value is $f$. A polynomial takes the value $f$, $s$, or $e$ according as the number of such occurrences is $0$, $1$, or at least $2$. This proves equality of the functions $\nu$.

If $u$ is isolated in $P$ with content $D$, there is exactly one occurrence $v$ of $Q$ supported in $D$. Put $E=\ct(v)$. The equality $\nu_Q(E)=1$ gives an occurrence of $P$ supported in $E$. This occurrence must be $u$, since $E\subseteq D$, giving $D\subseteq E$. The two inclusions give $E=D$. The symmetric argument completes the proof.
\end{proof}

\subsection{Lifting doubled absorption}

\begin{lemma}\label{lem:lifting}
Let $P$ be a polynomial and $q$ a word. Suppose $P$ has two distinct occurrences $u,v$ with $\ct(u),\ct(v)\subseteq\ct(q)$. Relative to the axioms of $\AA$, the identity
\begin{equation}\label{eq:doubleabs}
 2P+2q\approx2P
\end{equation}
implies $P+q\approx P$.
\end{lemma}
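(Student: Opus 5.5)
The plan is to reduce the claim to the doubled absorption $P+2q\approx P$, and then obtain that from~\eqref{eq:doubleabs} by multiplying it with elements that $P$ already absorbs, which exist because $P$ has the two occurrences $u,v$. Throughout, fix an evaluation in some $S\in\AA$ and write $P=R+u+v$, where $R$ is the sum of the remaining occurrences. If there are none, $P=u+v$ and the argument is the same with $R$ deleted.

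\emph{Step 1 (reduction).} By Lemma~\ref{lem:absorption}(i) with $a=b=q$, the equality $P+2q=P$ implies $P+q=P$. So it suffices to show $2q\in I_P$.

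\emph{Step 2 (what $P$ absorbs).} By Lemma~\ref{lem:absorption}(ii),
\[
 P+uv=R+(u+v+uv)=R+u+v=P,
\]
so $uv\in I_P$, and likewise $vu\in I_P$. By Lemma~\ref{lem:absorption}(iii), $I_P$ is closed under addition and multiplication and under $a\mapsto Pa,\ aP$. Hence $I_P$ contains every product of $u$'s and $v$'s that involves both letters, for instance $uvu=uv\cdot vu$. It also contains every product $w_1w_2$ with $w_1,w_2$ among $uv,vu$ and their multiples by $P$. This gives a supply of absorbed elements whose content is $\ct(u)\cup\ct(v)\subseteq\ct(q)$.

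\emph{Step 3 (sandwiching $q$).} Lemma~\ref{lem:deletion} gives $q=qwq$ for every word $w$ with $\ct(w)\subseteq\ct(q)$, and hence
\[
 2q=q\,(2uv)\,q=q\,(2vu)\,q .
\]
Multiplying~\eqref{eq:doubleabs} on both sides by absorbed elements $a,a'\in I_P$ built from $u$ and $v$ gives
\[
 2aPa'+2aqa'=2aPa'.
\]
Each product $aPa'$ lies in $I_P$ by Step 2. Using the sandwich $q=quq=qvq$, I will try to choose $a,a'$ so that $aqa'$ reproduces $q$ up to band identities on the words involved. I would first try $a=a'=2uv$ and conjugate afterwards by $q$. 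Alternatively, I would multiply~\eqref{eq:doubleabs} on both sides by $q$ to get $2q\le 2qPq$ in the additive order of $d(S)$. Then I would show, term by term, that each summand $2qwq$ of $2qPq$ is absorbed by $P$: the two distinct occurrences of $P$ play the role that $u$ and $v$ played in Step 2, and by Lemma~\ref{lem:deletion} each summand $qwq$ collapses to $q$ whenever $\ct(w)\subseteq\ct(q)$. Combining $2q\le 2aPa'$, or $2q\le2qPq$, with $2aPa'\in I_P$ would give $P+2q=P$. Here ``$\le$'' means the additive order, and the step uses $3x\approx 2x$ to pass between $I_{2P}$ and $I_P$ via Lemma~\ref{lem:absorption}(i).

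\emph{Main obstacle.} The delicate step is Step 3: moving from absorption by $2P$ to absorption by $P$ itself. The hypothesis that there are \emph{two distinct} occurrences supported in $\ct(q)$ is exactly what makes $uv$ and $vu$ absorbed, since a single isolated occurrence would not be; the obstruction $\TT$ shows the statement fails otherwise. The difficulty is that the words of $P$ not supported in $\ct(q)$ are not deleted by Lemma~\ref{lem:deletion}. I expect to handle them by multiplying only by absorbed elements whose content lies in $\ct(q)$, so that every cross term is either absorbed into $I_P$ via Lemma~\ref{lem:absorption}(iii) or doubled and dominated via~\eqref{eq:doubleabs}.
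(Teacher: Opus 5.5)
There is a genuine gap. Your Steps 1 and 2 are correct, but Step 3 never becomes a derivation, and both routes you sketch break down.

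\textbf{First route.} Taking $a=a'=2uv$ does give $uvquv\in I_P$: the element $2uvPuv$ lies in $I_P$ by Lemma~\ref{lem:absorption}(iii), and Lemma~\ref{lem:absorption}(i) splits off the other summand. But ``conjugating afterwards by $q$'' is not an operation under which $I_P$ is known to be closed. Lemma~\ref{lem:absorption}(iii) only permits products of elements of $I_P$ and multiplication by $P$ itself, and $q\in I_P$ is what you are trying to prove. A two-sided choice also cannot recover $q$ up to band identities. The word $\first(aqa')$ begins with $\first(a)$, and $\first(q)$ need not begin with $\first(a)$.

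\textbf{Second route.} This one is circular. With $P=R+u+v$ as in your notation, $2qPq=2qRq+2quq+2qvq=2q+2qRq$. So $2q\le2qPq$ is trivial. Showing that $P$ absorbs $2qPq$ already includes absorbing $2q$, which is the claim itself.

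\textbf{The missing idea.} Multiply \eqref{eq:doubleabs} on \emph{one} side only. Put $p=uv$, so $p\in I_P$ by your Step 2.
\begin{itemize}
\item Left multiplication gives $2pP+2pq\approx2pP$. Since $2pP\in I_P$, adding $P$ yields $P=P+2pP+2pq$.
\item Lemma~\ref{lem:absorption}(i), applied twice, then gives $P+pq=P$.
\item Right multiplication gives $qp\in I_P$ in the same way.
\item Multiplicative closure gives $(qp)(pq)=qpq\in I_P$. By Lemma~\ref{lem:deletion}, $qpq\approx q$ because $\ct(p)=\ct(u)\cup\ct(v)\subseteq\ct(q)$. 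Hence $P+q=P$.
\end{itemize}
This is the paper's argument. It never needs to control the occurrences of $P$ not supported in $\ct(q)$, which was the obstacle you anticipated. It also never multiplies by $q$ anything not already known to be absorbed.
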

\begin{proof}
Put $p=uv$. The two actual occurrences and Lemma~\ref{lem:absorption}(ii) give $P+p\approx P$. The absorption set of $P$ contains $2p$, $pP$, $Pp$, and their doubles. Multiply~\eqref{eq:doubleabs} on the left and on the right by $p$ to get
\[
 2pP+2pq\approx2pP,\qquad 2Pp+2qp\approx2Pp.
\]
Adding $P$ shows that $P$ absorbs $2pq$ and $2qp$, and Lemma~\ref{lem:absorption}(i) separates one copy of each. Multiplicative closure of $I_P$ shows that $P$ absorbs
\[
 (qp)(pq)=qpq\approx q,
\]
where the last identity follows from Lemma~\ref{lem:deletion}. This equational derivation applies in every subvariety of $\AA$.
\end{proof}

The two occurrences in Lemma~\ref{lem:lifting} may be equal as words, but both copies must already be present. We cannot duplicate a single isolated occurrence.

Let $\mathsf V\subseteq\IB$ and let $P$ be a polynomial on a fixed finite alphabet $X$. Choose one word representative for each element of $B_X$ and put
\begin{equation}\label{eq:Dset}
 \Dset_{\mathsf V}(P)=\{q\in B_X:\nu_P(\ct(q))=2,\quad
                    \mathsf V\models P+q\approx P\}.
\end{equation}
Content and the absorption condition are independent of the representatives.

\begin{proposition}\label{prop:normal}
Suppose $\mathsf Z\subseteq\AA$ and $d(S)\in\mathsf V$ for every $S\in\mathsf Z$. If $P$ has at least two occurrences, then $\Dset_{\mathsf V}(P)$ is nonempty and, in $\mathsf Z$,
\begin{equation}\label{eq:normal}
 P\approx C_P+\sum_{D\in\Iso(P)}u_D,
 \qquad C_P=\sum_{q\in\Dset_{\mathsf V}(P)}2q,
\end{equation}
where $u_D$ is the unique isolated occurrence with content $D$. If there are no isolated occurrences, we write only $C_P$.

If $P\approx Q$ holds in a variety $\mathsf W\subseteq\AA$ containing $\TT$, and $\mathsf V=\mathsf W\cap\IB$, then, using the same ambient alphabet, the two sets in~\eqref{eq:Dset} coincide. We may use the same doubled polynomial $C$ for $P$ and $Q$.
\end{proposition}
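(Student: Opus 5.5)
The plan is to handle the nonemptiness of $\Dset_{\mathsf V}(P)$, the absorption $P\approx P+C_P$, and the comparison of $P$ with $Q$ in turn. The normal form~\eqref{eq:normal} comes from Lemma~\ref{lem:lifting} together with the fact that the additive identity $3x\approx2x$ lets a doubled word absorb single copies of itself.

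\emph{Nonemptiness.} Let $u,v$ be two distinct occurrences in $P$ and put $q=uv$. Then $\ct(q)=\ct(u)\cup\ct(v)$ supports both occurrences, so $\nu_P(\ct(q))=2$. By Lemma~\ref{lem:absorption}(ii), $P+uv\approx P$ holds in $\AA$, hence in $\mathsf V\subseteq\IB\subseteq\AA$. The word representative of $uv$ in $B_X$ is band-equivalent to $uv$, so its class lies in $\Dset_{\mathsf V}(P)$.

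\emph{Absorption of $C_P$ in $\mathsf Z$.} This is the step I expect to need the most care. Fix $q\in\Dset_{\mathsf V}(P)$, some $S\in\mathsf Z$ and an evaluation $\bar a$ in $S$. Since $d$ is an endomorphism (Lemma~\ref{lem:retractions}), $2P(\bar a)=P(2\bar a)$ and $2q(\bar a)=q(2\bar a)$, and $2\bar a$ is an evaluation in $d(S)\in\mathsf V$. Hence
\[
2P(\bar a)+2q(\bar a)=P(2\bar a)+q(2\bar a)=P(2\bar a)=2P(\bar a).
\]
Thus $\mathsf Z\models 2P+2q\approx2P$. Since $\nu_P(\ct(q))=2$, there are two distinct occurrences of $P$ supported in $\ct(q)$. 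Lemma~\ref{lem:lifting} then gives $P+q\approx P$ in $\mathsf Z$, and additive closure of $I_P$ (Lemma~\ref{lem:absorption}(iii)) gives $P+2q\approx P$. Summing over $\Dset_{\mathsf V}(P)$ yields $P\approx P+C_P$. Replacing each chosen representative by any band-equivalent word is harmless, because $x^2\approx x$ makes band identities hold in $\mathsf Z$.

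\emph{Removing non-isolated occurrences.} Split $P=N+\sum_{D\in\Iso(P)}u_D$, where $N$ collects the occurrences $w$ with $\nu_P(\ct(w))=2$. An isolated occurrence is the only occurrence supported in its content, so distinct isolated occurrences have distinct contents and $u_D$ is well defined. Each $w$ in $N$ has $\nu_P(\ct(w))=2$. Moreover $\mathsf V\models P+w\approx P$, because $w$ is a summand of $P$ and $2x\approx x$ holds in $\mathsf V$. Hence the class of $w$ lies in $\Dset_{\mathsf V}(P)$, and $2w$ is a summand of $C_P$. Since $2w+w=3w\approx2w$, we get $C_P+w\approx C_P$. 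Therefore
\[
P\approx C_P+N+\sum_{D\in\Iso(P)} u_D\approx C_P+\sum_{D\in\Iso(P)} u_D,
\]
which is~\eqref{eq:normal}.

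\emph{Coincidence for $P\approx Q$.} Take $X\supseteq\ct(P)\cup\ct(Q)$ as the ambient alphabet. Lemma~\ref{lem:count} gives $\nu_P(D)=\nu_Q(D)$ for all $D\subseteq\ct(P)\cup\ct(Q)$. This extends to all $D\subseteq X$, since $\nu_P(D)=\nu_P(D\cap\ct(P))$ and likewise for $Q$. In particular, taking $D=X$ shows that $P$ has at least two occurrences if and only if $Q$ does. Since $\mathsf V\subseteq\mathsf W\models P\approx Q$, we have $\mathsf V\models P+q\approx P$ if and only if $\mathsf V\models Q+q\approx Q$. So both defining conditions in~\eqref{eq:Dset} agree, $\Dset_{\mathsf V}(P)=\Dset_{\mathsf V}(Q)$, and the common doubled part $C=C_P=C_Q$ can be used for both polynomials.
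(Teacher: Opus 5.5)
Your proof is correct and follows the paper's argument. You lift $2P+2q\approx2P$ from the doubled image via Lemma~\ref{lem:lifting}, discard nonisolated occurrences with $3w\approx2w$, and compare the sets $\Dset_{\mathsf V}$ via Lemma~\ref{lem:count}; your explicit endomorphism computation and your extension of $\nu_P=\nu_Q$ to the whole ambient alphabet fill in steps the paper leaves implicit. The one difference is the nonemptiness witness: you use $uv$ for two occurrences, while the paper multiplies all occurrences of $P$ to get a full-content member of $\Dset_{\mathsf V}(P)$, which it reuses later in Theorem~\ref{thm:finitebasisA}.
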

\begin{proof}
For $q\in\Dset_{\mathsf V}(P)$, the doubled image condition implies $\mathsf Z\models2P+2q\approx2P$. Lemma~\ref{lem:lifting} gives $P+q\approx P$ and $P+2q\approx P$. Each nonisolated occurrence $w$ of $P$ belongs to $\Dset_{\mathsf V}(P)$: its count is $2$, and $P$ absorbs it in every additively idempotent semiring. We add all the doubled summands and remove each nonisolated original occurrence using $w+2w\approx2w$. This proves~\eqref{eq:normal}.

To prove nonemptiness, take the product of all the word occurrences of $P$. In a doubly idempotent semiring, $P$ absorbs every occurrence and their product by Lemma~\ref{lem:absorption}. The product contains the content of every occurrence; its count is $2$. It lies in~\eqref{eq:Dset}.

For the final assertion, Lemma~\ref{lem:count} gives $\nu_P=\nu_Q$ and common isolated contents. Since $\mathsf V\models P\approx Q$, the absorption conditions in~\eqref{eq:Dset} agree.
\end{proof}

Adding the doubled words in~\eqref{eq:normal} preserves $\nu_P$ and all isolated contents: if $q\in\Dset_{\mathsf V}(P)$ and $\ct(q)\subseteq E$, then $\nu_P(E)=2$ already.

\subsection{Absorption of words with full content}

We use Pol\'ak's description of $\IB$~\cite{Polak} to test absorption of full-content words. For a polynomial $R$ on $X$, a letter $a\in X$, and $E\subseteq X$, put
\begin{equation}\label{eq:rho}
 \rho_R(a,E)=1
\end{equation}
if some occurrence $w$ of $R$ contains $a$ and all letters preceding the first $a$ in $w$ lie in $E$. This condition is monotone in $E$. For $i\in\{0,1,2\}$, we call a repetition-free word $a_1\cdots a_k$ \emph{left admissible at level $i$} for $R$ under the following conditions:
\begin{enumerate}[label=(\arabic*),start=0,leftmargin=2.4em]
\item there is no restriction;
\item if $k>0$, the letter $a_1$ is the first letter of some occurrence of $R$;
\item $\rho_R(a_j,\{a_1,\ldots,a_{j-1}\})=1$ for every $j$.
\end{enumerate}
The empty sequence is admissible at each level. Reversing all the words defines right admissibility. The empty sequence belongs to this combinatorial definition and does not introduce a semiring term.

\begin{lemma}\label{lem:licensing}
For each $\mathsf V\subseteq\IB$, there are $\lambda,\mu\in\{0,1,2\}$ with the following property. If $\ct(R)=\ct(g)=X$, then
\begin{equation}\label{eq:license}
 \mathsf V\models R+g\approx R
\end{equation}
if and only if $\first(g)$ is left admissible at level $\lambda$ and $\rev(\last(g))$ is left admissible at level $\mu$ for the reversed polynomial $\rev(R)$.
\end{lemma}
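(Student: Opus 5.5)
The plan is to reduce the absorption problem in $\mathsf V$ to separate left and right combinatorial tests, and then to read $\lambda$ and $\mu$ off Pol\'ak's description of the subvariety lattice of $\IB$~\cite{Polak}.

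\emph{Reduction to $\first(g)$ and $\last(g)$.} Every member of $\mathsf V$ has a regular-band multiplicative reduct by Lemma~\ref{lem:regular-double}. So $g$ may be replaced by any word with the same $\first$ and $\last$, by~\eqref{eq:rbword}. A regular band is a subdirect product of a left regular band and a right regular band. I would like to transfer this to $\IB$ as follows. Pol\'ak's list shows that every subvariety of $\IB$ is the join of a ``left'' part and a ``right'' part. In the left part the relevant words are determined by $\first$; the right part is its mirror image under $\rev$. For these two parts the truth of $R+g\approx R$ should depend only on $\first(g)$, respectively only on $\last(g)$. The statement is left--right symmetric, and $\rev$ exchanges the two sides. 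So it then suffices to treat the left condition, define $\lambda$ from the left part, and obtain $\mu$ by reversal.

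\emph{Definition of $\lambda$.} Pol\'ak's list leaves three possibilities for the left part of $\mathsf V$:
\begin{itemize}
\item[$\lambda=0$:] the left structure is absent, meaning $\mathsf V$ satisfies the left absorption identities. Then every full-content word is absorbed as far as the left side is concerned.
\item[$\lambda=1$:] the left structure is visible only through first letters. These are left-normal-type generators, such as the two-element semirings with left-zero multiplication.
\item[$\lambda=2$:] $\mathsf V$ contains the full left-regular generator, a small semiring that records the order of first occurrences.
\end{itemize}
I would set $\lambda$ to be the largest level whose generator lies in $\mathsf V$.

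\emph{Sufficiency.} Assume $\first(g)=a_1\cdots a_k$ satisfies condition (2). I would prove $R+h\approx R$ by induction on $j$, where $h$ is a word with $\first(h)=a_1\cdots a_j$, using Lemma~\ref{lem:absorption}(iii).
\begin{itemize}
\item At the step from $j-1$ to $j$, pick the occurrence $w$ given by $\rho_R(a_j,\{a_1,\dots,a_{j-1}\})=1$. The previously absorbed word has content containing every letter that precedes the first $a_j$ in $w$.
\item Multiply that word by $w$. The product is absorbed, since it is a product of elements of $I_R$.
\item The first-occurrence word of this product is $a_1\cdots a_j$ followed by letters whose first occurrences come later. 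Using~\eqref{eq:rbword}, and afterwards right-multiplying by a full-content absorbed word, this yields a full-content word with the prescribed $\first$ and an arbitrary $\last$.
\end{itemize}
The right condition is handled in the mirror way. Combining the two halves produces a word with both the prescribed $\first$ and the prescribed $\last$. This combination, together with the level-$0$ and level-$1$ cases, is where the left/right splitting is genuinely used.

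\emph{Necessity and main obstacle.} Suppose the level-$\lambda$ condition fails. Then I would evaluate into the level-$\lambda$ generator.
\begin{itemize}
\item If $a_1$ is not the first letter of any occurrence (level $1$), send $a_1$ to one left-zero element and the other letters to the other element. Then $R$ and $R+g$ take different values.
\item If $\rho_R(a_j,\{a_1,\dots,a_{j-1}\})=0$ (level $2$), evaluate according to the set $E=\{a_1,\dots,a_{j-1}\}$ to separate $R+g$ from $R$.
\end{itemize}
I expect the main obstacle to be exactly this point. One must check against Pol\'ak's classification that only these three left behaviours occur, and that left and right absorption really decouple in every subvariety. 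The existence of the separating evaluations relies on this. I would first try to verify it on the generating semirings of Pol\'ak's join-irreducible subvarieties.
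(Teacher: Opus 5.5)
\paragraph{Verdict.} Your necessity direction matches the paper's. Sufficiency has a genuine gap: it cannot be obtained from products of absorbed occurrences, and it has to be checked on Pol\'ak's generators instead.

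\paragraph{Necessity.} Once Pol\'ak's list is invoked, failure of the level-$1$ condition is detected in $L$: send $a_1$ to the top element and all other letters to the bottom. Failure of $\rho_R(a_j,E)=1$ is detected in $B$: send $E$ to the adjoined identity, $a_j$ to the top, and the rest to the bottom.

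\paragraph{The gap in sufficiency.} Your induction tries to build an absorbed word with prescribed $\first$ using only closure of $I_R$ under products (Lemma~\ref{lem:absorption}(iii)). This cannot work.
\begin{itemize}
\item Read literally, the hypothesis already fails at $j=1$: $R$ need not absorb a word of content $\{a_1\}$. For $R=xz+y$, the two-element lattice $D$ with $x=1$, $y=z=0$ separates $R+x$ from $R$.
\item Read as ``$\first(h)$ begins with $a_1\cdots a_j$'', the step fails, because the witness occurrence $w$ can carry letters that are not yet due. Take $R=xz+y$ and $g=xyxz$. Then $\first(g)=xyz$ is left admissible at level $2$ (witnesses $xz$, $y$, $xz$). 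Also $\rev(\last(g))=zxy$ is left admissible at level $2$ for $\rev(R)=zx+y$. Indeed $\IB\models xz+y+xyxz\approx xz+y$, as a check on Pol\'ak's generators shows. Yet every product of the occurrences $xz$ and $y$ has first-occurrence word beginning with $xz$ or with $y$. So no such product, even modulo band identities, has first-occurrence word $xyz$.
\item At levels $0$ and $1$, no derivation valid throughout $\IB$ can give sufficiency at all, because the absorptions licensed there generally fail in $\IB$. For example, $xy+xyx\approx xy$ holds in $\Var(B)$ but fails in $C$ with $x\mapsto b$, $y\mapsto a$. These cases must use the specific generators of $\mathsf V$.
\end{itemize}

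\paragraph{The missing idea.} Make sufficiency semantic too, as the paper does. By Pol\'ak's Result~4, $\mathsf V$ is generated by a subset $\mathcal K$ of the eleven algebras. So $\mathsf V\models R+g\approx R$ holds if and only if every algebra in $\mathcal K$ satisfies it. Each of these is a direct check:
\begin{itemize}
\item $D$, $M$ and $M^0$ impose nothing, since $\ct(g)=\ct(R)=X$.
\item A $0$-extension reduces to its base algebra, since sending any variable to $0$ sends $g$ to $0$.
\item $L$ tests the first letter.
\item For each $Y\subsetneq X$ sent to the adjoined identity, $B$ tests whether the first letter of $g$ outside $Y$ is the first surviving letter of some occurrence of $R$.
\end{itemize}
The $B$ test is equivalent to level $2$. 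Necessity follows by taking $Y=\{a_1,\dots,a_{j-1}\}$. Sufficiency holds because the first letter $a_j$ of $\first(g)$ outside $Y$ has all its predecessors in $Y$, so monotonicity of $\rho_R$ supplies the required occurrence.

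Setting $\lambda,\mu$ to the strongest left and right levels among $\mathcal K$ then proves both directions at once. The left/right decoupling you flagged as the main obstacle comes for free, since each generator imposes a purely left, purely right, or empty condition.
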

\begin{proof}
By~\cite[Result 4]{Polak}, every subvariety of $\IB$ is generated by a subset of the eleven algebras
\[
 L,R,D,M,M^0,B,C,L^0,R^0,B^0,C^0
\]
in that paper. The algebras $L$ and $R$ have two-element chain addition and left-zero or right-zero multiplication; $D$ is the two-element distributive lattice; and $M$ has both operations equal to chain join. The algebra $B$ is a two-element left-zero band with an identity element adjoined, ordered as $a<1<b$, and $C$ is its right-zero dual. Addition is join. The superscript $0$ adjoins an element which is an additive identity and a multiplicative zero. These descriptions specify elements, without adding constants to the signature.

Pol\'ak's absorption tests~\cite[Result 3]{Polak} say that $L$ tests the first letter and $R$ the last letter. The algebra $B$ tests, for every set $Y$ of deleted letters, whether the first letter of $g$ after deleting $Y$ is the first letter of some occurrence of $R$ after the same deletion; $C$ gives the reverse test. For $D,M,M^0$, the condition $\ct(g)=\ct(R)=X$ satisfies the absorption tests. The test for an extension by $0$ restricts $R$ to occurrences supported in $\ct(g)$, which here retains all occurrences. The algebras $L,L^0$ impose left level $1$; $B,B^0$ impose left level $2$; their duals impose the corresponding right levels.

To identify the test for $B$ with level $2$, write $\first(g)=a_1\cdots a_n$. Necessity follows by deleting $\{a_1,\ldots,a_{j-1}\}$ for each $j$. For sufficiency, let $Y\subsetneq X$ be any proper subset and let $a_j$ be the first letter of $g$ outside $Y$. Its predecessors in $\first(g)$ lie in $Y$. Monotonicity in~\eqref{eq:rho} supplies an occurrence of $R$ whose first surviving letter is $a_j$. Deleting all of $X$ gives equal empty words. Take the strongest left and right requirements among the chosen generators. These determine $\lambda,\mu$ and prove the lemma, including the trivial variety, for which both levels can be $0$.
\end{proof}

\begin{lemma}\label{lem:admissible-concat}
At any of the three levels, every prefix of an admissible repetition-free word is admissible. If $p$ and $q$ are left admissible, then $\first(pq)$ is left admissible. Each $\first(w)$, for an occurrence $w$ of $R$, is left admissible at every level. The analogous statements hold on the right.
\end{lemma}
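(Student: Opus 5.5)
The statement is combinatorial, so I will verify it level by level directly from the definitions of left admissibility at levels $0$, $1$, $2$ and the predicate $\rho_R$ in~\eqref{eq:rho}. The right-hand statements then follow by applying the left-hand ones to $\rev(R)$ and reversed words, since right admissibility is defined by reversal. Level $0$ imposes no condition, so every claim there is trivial. Throughout I will use that $\first(pq)$ equals $p$ followed by the letters of $\first(q)$ that do not occur in $p$, in their order in $\first(q)$. This holds because $p$ and $q$ are repetition-free, hence $\first(p)=p$ and $\first(q)=q$.

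\emph{Prefixes.} At level $1$, a nonempty prefix of $a_1\cdots a_k$ has the same first letter $a_1$. The empty prefix is admissible by convention. At level $2$, the condition $\rho_R(a_j,\{a_1,\dots,a_{j-1}\})=1$ is indexed by $j$, and a prefix simply imposes a subset of these conditions.

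\emph{Concatenation.} Write $\first(pq)=p\,c_1\cdots c_m$, where the $c_i$ are the letters of $q$ not in $p$, taken in their order in $q$. At level $1$: if $p$ is nonempty, the first letter is that of $p$, which is licensed. Otherwise $\first(pq)=q$, which is admissible. At level $2$, the conditions for the letters of $p$ are inherited from $p$. Now fix $c_i=b_t$, where $q=b_1\cdots b_s$. Admissibility of $q$ gives $\rho_R(b_t,\{b_1,\dots,b_{t-1}\})=1$. Each $b_r$ with $r<t$ either lies in $p$ or equals some $c_{i'}$ with $i'<i$. So $\{b_1,\dots,b_{t-1}\}\subseteq\ct(p)\cup\{c_1,\dots,c_{i-1}\}$, and monotonicity of $\rho_R$ in $E$ gives the required condition.

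\emph{Occurrences.} Let $w$ be an occurrence of $R$ and $\first(w)=a_1\cdots a_k$. At level $1$, $a_1$ is the first letter of $w$. At level $2$, the letters preceding the first $a_j$ in $w$ are exactly $a_1,\dots,a_{j-1}$, since $\first(w)$ lists first occurrences in order. Hence $w$ itself witnesses $\rho_R(a_j,\{a_1,\dots,a_{j-1}\})=1$.

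The only step needing any care is the level-$2$ concatenation claim. Its point is that the predecessor set of $c_i$ in $\first(pq)$ contains its predecessor set in $q$, so monotonicity applies. No step is a real obstacle.
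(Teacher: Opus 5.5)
Your proposal is correct and follows essentially the same argument as the paper. Both proofs check the claims level by level, and the only nontrivial point in each is the level-$2$ concatenation step: a letter of $q$ absent from $p$ has at least as large a predecessor set in $\first(pq)$ as in $q$, so monotonicity of $\rho_R$ preserves admissibility. The prefix and occurrence claims follow directly from the definitions, and the right-hand statements by reversal.
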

\begin{proof}
At level $2$, a letter of $q$ absent from $p$ has at least the same predecessor set in $\first(pq)$ as it had in $q$. Monotonicity of~\eqref{eq:rho} preserves its admissibility. Prefix closure and the assertion for an occurrence follow from the definition. At level $0$ there is no restriction; at level $1$ the only condition concerns the initial letter.
\end{proof}

\section{Protected identities and recursive extraction}\label{sec:extraction}

The normal forms leave isolated word occurrences that the doubled part does not determine. To compare them, we introduce protected identities. The next results apply to arbitrary subvarieties of $\MI$ and do not require the additive identity $3x\approx2x$.

\begin{definition}\label{def:protected}
For $\mathsf W\subseteq\MI$, define a relation $\G_{\mathsf W}$ on nonempty words by
\begin{equation}\label{eq:protected}
 u\equiv_{\G_{\mathsf W}}v
 \quad\Longleftrightarrow\quad
 \mathsf W\models u+2u\omega u\approx v+2v\omega v,
\end{equation}
where $\omega$ is a variable not occurring in $u,v$. We call the displayed semiring identity the \emph{protected identity} associated with $u\approx v$.
\end{definition}

\begin{lemma}\label{lem:G}
The relation $\G_{\mathsf W}$ is a fully invariant semigroup congruence containing the band identities. It is the identity theory of a band variety and has a finite relative band basis.
\end{lemma}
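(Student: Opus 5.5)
We verify the congruence properties directly from Definition~\ref{def:protected}. Then we read off the band variety and its basis.

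\emph{Well-definedness.} Since $\omega$ is a fresh variable, any two choices of $\omega$ give equivalent identities by renaming. Hence the relation does not depend on that choice.

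\emph{Equivalence relation.} Reflexivity and symmetry are immediate. For transitivity, choose one variable $\omega$ fresh for $u,v,w$ simultaneously and chain the two identities.

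\emph{Full invariance.} Let $\sigma$ be a substitution of nonempty words for variables. We may assume $\sigma$ fixes $\omega$, after renaming $\omega$ to a variable outside $\ct(\sigma(x))$ for all $x\in\ct(u)\cup\ct(v)$. Applying $\sigma$ to the protected identity gives
\[
 \sigma(u)+2\sigma(u)\omega\sigma(u)\approx\sigma(v)+2\sigma(v)\omega\sigma(v),
\]
which is the protected identity for $\sigma(u)\approx\sigma(v)$.

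\emph{Band identities.} These hold because $\MI\models x^2\approx x$. If $\BB\models u\approx v$, then $u\approx v$ follows from associativity and $x^2\approx x$. So $u$ may be replaced by $v$ inside every term, and the protected identity holds in $\MI\supseteq\mathsf W$.

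\emph{Compatibility with multiplication.} By symmetry it suffices to show that $u\equiv v$ implies $uz\equiv vz$ for a word $z$ (the case $zu\equiv zv$ is the mirror image). This is the main obstacle, because the protected term is not a product context of $u$. We cannot simply multiply the identity by $z$: right multiplication of $u+2u\omega u$ by $z$ yields $uz+2u\omega uz$, whereas the target is $uz+2uz\omega uz$.

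The plan is to use full invariance with the substitution $\omega\mapsto z\omega'$, where $\omega'$ is a new fresh variable. This gives
\[
 u+2uz\omega'u\approx v+2vz\omega'v.
\]
Multiplying this on the right by $z$ gives
\[
 uz+2uz\omega'uz\approx vz+2vz\omega'vz,
\]
which is exactly the protected identity for $uz\approx vz$, with $\omega'$ fresh for $uz$ and $vz$. Distributivity is used when multiplying through the sum. If $z$ shares variables with $u,v$, this causes no problem, since the substitution only requires $\omega'$ to be fresh. Combined with transitivity, compatibility on both sides yields a semigroup congruence.

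\emph{Band variety and finite basis.} A fully invariant congruence on the free semigroup over the countable alphabet is the identity theory of a semigroup variety. Since $\G_{\mathsf W}$ contains $x^2\approx x$, this is a band variety. By~\cite{GP}, it has a finite basis, and hence a finite basis relative to $\BB$.
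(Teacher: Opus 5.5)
Your proposal is correct and follows essentially the same route as the paper's proof: a common fresh variable for transitivity, renaming $\omega$ before applying a substitution, compatibility by substituting for $\omega$ a product with the context and then multiplying through, and finally the finite basis theorem for bands. The only difference is that the paper handles a two-sided context at once via $\omega\mapsto q\omega p$, whereas you treat one side at a time and combine the two sides by transitivity; this changes nothing of substance.
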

\begin{proof}
Reflexivity and symmetry follow from equality. For transitivity, use the same fresh variable for the intermediate word. Before applying a nonempty word substitution to~\eqref{eq:protected}, rename $\omega$ to avoid all substitution images; the substituted identity has the same protected form.

For nonempty contexts $p,q$, replace $\omega$ in~\eqref{eq:protected} by $q\omega p$ and multiply on the left by $p$ and on the right by $q$. The result is
\[
 puq+2(puq)\omega(puq)\approx pvq+2(pvq)\omega(pvq).
\]
If only the left context is present, use $\omega\mapsto\omega p$; if only the right context is present, use $\omega\mapsto q\omega$. None of these substitutions uses an empty word. They establish full invariance and compatibility with contexts. Substituting either side of a band identity into a word or its doubled summand preserves equality, so every band identity belongs to the relation. The finite basis theorem for bands gives the finite relative basis.
\end{proof}

The relation $\G_{\mathsf W}$ is a theory of \emph{word} identities; $+$ occurs only in its definition. We call it the \emph{protected band theory} of $\mathsf W$.

\begin{lemma}\label{lem:single-extraction}
Suppose $\mathsf W\models P\approx Q$ and that each side has precisely one occurrence supported in a nonempty set $D$. Let these occurrences be $u,v$, each with content $D$, and suppose
\begin{equation}\label{eq:same-profile}
 \first(u)=\first(v),\qquad\last(u)=\last(v).
\end{equation}
If $|D|\ge2$, put $L=s(u)$, $R=e(u)$, and $t=LvR$. Then $u\equiv_{\G_{\mathsf W}}t$.
\end{lemma}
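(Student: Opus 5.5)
The plan is to obtain the protected identity by a substitution that keeps $u$ and $v$ intact and turns every other occurrence into a word containing a factor $u\omega u$. I will then clean up both sides with band identities and multiplicative idempotence.

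\emph{Step 1 (substitution).} Let $\sigma$ fix each variable of $D$ and send each variable $x\notin D$ to $u\omega u$, where $\omega$ is fresh. Applying $\sigma$ to $P\approx Q$ gives an identity in $\mathsf W$. On the left, the only occurrence supported in $D$ is $u$, and it stays $u$. Every other occurrence $w$ of $P$ contains a letter outside $D$, so $\sigma(w)$ has the form $\alpha\,u\omega u\,\beta$, with $\alpha,\beta$ possibly empty words over $D\cup\{\omega\}$. The right side behaves the same way, with $v$ in place of $u$.

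\emph{Step 2 (the contexts $L$ and $R$).} Multiply the resulting identity on the left by $L=s(u)$ and on the right by $R=e(u)$. Since $L$ is a prefix of $u$ and $R$ is a suffix of $u$, band identities give $LuR\approx u$. The right-hand isolated word becomes $LvR=t$. Note that $\ct(t)=D$, and $t$ has the same first- and last-occurrence words as $u$ because of~\eqref{eq:same-profile}. This is exactly why the proof multiplies by $L,R$ rather than by $u$: multiplying by $u$ would collapse $v$ to $u$ through Lemma~\ref{lem:deletion}. Each remaining term has the form $(L\alpha u)\,\omega\,(u\beta R)$. In this form, $\ct(L\alpha u)$ and $\ct(u\beta R)$ contain $D$ and possibly $\omega$.

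\emph{Step 3 (normalizing the $\omega$-terms).} Every remaining term should be brought to the form $u\omega u$, or to $t\omega t$ on the right side. A second substitution $\omega\mapsto u\omega u$ (respectively $t\omega t$) helps here. After it, each term contains $u\omega u$ flanked by words whose content includes $D\cup\{\omega\}$, and Lemma~\ref{lem:deletion} together with $x^2\approx x$ reduces it to $u\omega u$. One caveat: Lemma~\ref{lem:deletion} needs equal contents on the two flanks. If a flank lacks $\omega$, I would fix this by inserting a factor $u\omega u$ through the substitution chosen in Step~1.

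\emph{Step 4 (the coefficient $2$).} The number of $\omega$-terms may be $0$, $1$, or larger, and the two sides may differ in this number. To obtain the coefficient exactly $2$, I use
\[
 (u+u\omega u)^2=u+3u\omega u
\]
and $4z\approx2z$. If both sides have no $\omega$-terms, then $u\approx t$ already holds, and the protected identity follows by compatibility, as in Lemma~\ref{lem:G}. Otherwise, I would add $2u\omega u$ (respectively $2t\omega t$) through squaring and then absorb the surplus using $kz\approx2z$ for $k\ge2$.

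\emph{Main obstacle.} I expect Step~3 to be the hard part. The issue is controlling the words $\alpha,\beta$ so that every extra term really collapses to $u\omega u$ on one side and to $t\omega t$ on the other, without using the relation $u\approx t$ itself. If this fails, I would first try a different image for the outside variables, such as $x\mapsto u\omega u$ composed with multiplication by $u\omega$ and $\omega u$ only inside the doubled part. I expect the first- and last-occurrence agreement~\eqref{eq:same-profile} to be what makes $t\omega t$ and $u\omega u$ match under the flanking words.
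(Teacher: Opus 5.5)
Your Step~2 (multiplying by $L$ and $R$ so that the isolated words become $u$ and $t$) is exactly what the paper does. However, substituting the \emph{undoubled} word $u\omega u$ in Step~1 makes Step~3 impossible, and Step~3 is the heart of the lemma. After Step~2 each extra term is an ordinary word. For general $\mathsf W$ (e.g.\ $\mathsf W=\MI$, which contains $\KK$ and hence every band as a multiplicative subsemigroup), only band identities are available for such words. In a band, $L\alpha u\omega u\beta R$ is usually not equal to $u\omega u$, because its longest $\omega$-free prefix is $L\alpha u$ rather than $u$.

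For instance, take $D=\{a,b,c\}$, $u=abc$ (so $L=ab$, $R=bc$) and an occurrence $w=acx$ with $x\notin D$. The resulting term has $\omega$-free prefix $abacabc$. By the word problem for free bands, band-equal words have band-equal $s$-parts. Here $s(abacabc)=aba$ and $s(abc)=ab$, which already differ in a right-zero band. Your second substitution $\omega\mapsto u\omega u$ leaves this prefix unchanged up to band equivalence, and you cannot drop $L$ without collapsing $v$ to $u$.

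The missing idea is to substitute the \emph{doubled} word $2u\omega u$ for every variable outside $D$. Then each extra term becomes $2^jLw'R$ with $j\ge1$, a doubled word. Doubled words lie in $d(S)\in\IB$, whose multiplicative reduct is a regular band (Lemma~\ref{lem:regular-double}), so by~\eqref{eq:rbword} they are determined by their first- and last-occurrence words. Now $L$ contains every letter of $D$ except the last letter of $\first(u)$, and $R$ every letter except the first letter of $\last(u)$. Hence $\first(Lw'R)=\first(u)\omega$ and $\last(Lw'R)=\omega\last(u)$, so every extra term equals $2u\omega u$. In the example above, the bad term has first- and last-occurrence words $abc\omega$ and $\omega abc$, exactly those of $u\omega u$. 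The same criterion together with~\eqref{eq:same-profile} gives $2t\omega t\approx2u\omega u$. This is where the profile hypothesis is used.

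Step~4 has a second gap. The lemma is stated for arbitrary $\mathsf W\subseteq\MI$, where only $4z\approx2z$ is available. The identity $3z\approx2z$ fails there, for example in Boolean rings, which satisfy $3z\approx z$. So $kz\approx2z$ holds only for even $k$. Squaring gives $u+u\omega u\approx u+3u\omega u$, never $u+2u\omega u$. With an odd number of undoubled $\omega$-terms on one side you would be stuck: in the two-element Boolean ring, $u+u\omega u$ and $u+2u\omega u=u$ take different values.

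The doubled substitution removes this issue. All extra coefficients are powers of $2$, which reduce to $2$ by~\eqref{eq:fourtwo}. You then simply add $2u\omega u$ to both sides of the identity, where it absorbs every extra term. This yields $u+2u\omega u\approx t+2u\omega u\approx t+2t\omega t$.
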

\begin{proof}
Let $h=u\omega u$. In $P\approx Q$, replace every variable outside $D$ by $2h$ and keep all variables of $D$ fixed. Multiply the result on the left by $L$ and on the right by $R$, and add $2h$ to both sides. The isolated occurrences become $LuR\approx u$ and $LvR=t$, since $L$ and $R$ are an actual prefix and suffix of $u$.

Every other occurrence contains a variable outside $D$. After distribution, it gives $2^jLw'R$ for some $j\ge1$, where $w'$ contains a full copy of $h$. By~\eqref{eq:fourtwo} its coefficient is $2$. The prefix $L$ contains all letters of $D$ except the last letter of $\first(u)$. That last letter appears before $\omega$ in the first copy of $h$. Dually, $R$ fixes the last-occurrence word. These prefix and suffix conditions give
\[
 \first(Lw'R)=\first(u)\omega,\qquad
 \last(Lw'R)=\omega\last(u).
\]
By~\eqref{eq:rbword} and Lemma~\ref{lem:regular-double}, each such doubled word equals $2h$. The added $2h$ absorbs all these copies, and the resulting identity is
\[
 u+2h\approx t+2h.
\]
The word $t=LvR$ has the same first- and last-occurrence words as $u$. The prefix $L$ supplies their common proper first-occurrence prefix, $R$ supplies their common proper last-occurrence suffix, and $v$ supplies the missing letters in the required order. These matching orders give $2t\omega t\approx2h$, which turns the last displayed identity into the protected identity for $u,t$.
\end{proof}

\begin{theorem}[Recursive extraction]\label{thm:extraction}
Let $\mathsf W\subseteq\MI$ and $\mathsf W\models P\approx Q$. Suppose each side has exactly one occurrence supported in $D$, these occurrences $u,v$ have content $D$, and~\eqref{eq:same-profile} holds. Then
\begin{equation}\label{eq:extraction-result}
 \mathsf W\models u+2u\omega u\approx v+2v\omega v.
\end{equation}
\end{theorem}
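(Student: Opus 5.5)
We prove the theorem by induction on $|D|$. The identity we want says that $u\equiv_{\G_{\mathsf W}}v$. By Lemma~\ref{lem:G}, $\G_{\mathsf W}$ is a fully invariant congruence containing all band identities, so it is enough to build a chain of $\G_{\mathsf W}$-related words from $u$ to $v$.

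\emph{Base case $|D|=1$.} If $D=\{x\}$, then $u$ and $v$ are powers of $x$. By $x^2\approx x$ both equal $x$, so the protected identity holds trivially.

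\emph{Inductive step, $|D|\ge2$.} Lemma~\ref{lem:single-extraction} gives $u\equiv_{\G_{\mathsf W}}t$, where $t=LvR$, $L=s(u)$ and $R=e(u)$. Applying the same lemma with the roles of $P$ and $Q$ exchanged gives $v\equiv_{\G_{\mathsf W}}t'$, where $t'=s(v)\,u\,e(v)$. It therefore suffices to show that $t$ and $v$ are related, or $t$ and $t'$.

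By~\eqref{eq:proper-content} and~\eqref{eq:same-profile}, the prefixes $s(u)$ and $s(v)$ have the same content $D\setminus\{a\}$, where $a$ is the last letter of the common first-occurrence word. Likewise $e(u)$ and $e(v)$ have the common content $D\setminus\{b\}$. We apply the induction hypothesis to the pairs $(s(u),s(v))$ and $(e(u),e(v))$, as follows. Substitute $2h$ for every variable outside $D'=D\setminus\{a\}$, where $h$ is a product word containing everything, or handle it more carefully as in the proof of Lemma~\ref{lem:single-extraction}. Then right-multiply by suitable words, so that the unique occurrences supported in $D'$ become the prefixes. Proving this relies on Lemma~\ref{lem:count}-style isolation being inherited. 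I expect this reduction to be the main obstacle. Substituting into an identity in which only $u$ and $v$ are supported in $D$ need not leave exactly one occurrence supported in $D'$, since other occurrences of $P$ and $Q$ may also lie in $D'$.

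To avoid this, I would first try a cleaner route that uses only the congruence. Write $u=s(u)\,a\,u_1$ and $v=s(v)\,a\,v_1$. Using Lemma~\ref{lem:deletion}, show in bands that $t=s(u)\,v\,e(u)$ equals $s(u)\,a\,v_1\,e(u)$ and also $s(u)\,s(v)\,a\,v_1\,e(u)$. Then iterate Lemma~\ref{lem:single-extraction} on the new identity produced inside its proof, namely $u+2h\approx t+2h$. In that identity $u$ and $t$ are the only occurrences supported in $D$, and they have the same profile. The right-hand side is now $t$, whose prefix $s(t)$ agrees with $s(u)$ and whose suffix agrees with $e(u)$. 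A second extraction of the pair $(t,u)$ then gives $t\equiv_{\G_{\mathsf W}}s(t)\,u\,e(t)$, and $s(t)\,u\,e(t)$ equals $u$ in bands by Lemma~\ref{lem:deletion}, since the contents match.

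This suggests the proof may close without induction: derive $u\equiv t$ and then show $t\equiv v$ symmetrically, using $v\equiv t'$ together with a band computation comparing $t$ and $t'$. The computation would show that both equal $s(u)\,s(v)\,\cdots\,e(v)\,e(u)$ modulo $\G_{\mathsf W}$, and this is where the induction on content should be used for the mismatched proper factors $s(u)$ versus $s(v)$. The decisive check is the precise band computation. I would verify it first, before committing to the inductive route.
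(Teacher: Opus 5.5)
Your framework (induction on $|D|$, with Lemma~\ref{lem:single-extraction} giving $u\equiv_{\G_{\mathsf W}}t=s(u)\,v\,e(u)$) is right. There is a genuine gap, however: the proposal never finds a valid way to apply the induction hypothesis, and that is the whole content of the inductive step.

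\begin{itemize}[leftmargin=2.1em]
\item Your first mechanism fails. Substituting $2h$ for every variable outside $D'=D\setminus\{a\}$ also replaces $a$. Then $u,v$ are no longer words of content $D'$, and nothing in the new identity is supported in $D'$. Your stated worry is misplaced: every other occurrence of $P,Q$ contains a variable outside $D$, so it can never be supported in a subset of $D$.
\item Your ``cleaner route'' rests on a false band identity. The longest proper-content prefix of $t=s(u)s(v)\,a\,v_1e(u)$ is $s(u)s(v)$. So $t\approx s(u)\,a\,v_1e(u)$ in $\BB$ would need $s(u)s(v)\approx s(u)$, which fails in general. For example, $u=xyaxy$ and $v=xyxaxy$ have equal first- and last-occurrence words, yet $t\approx v$ while $s(u)\,a\,v_1e(u)\approx u\not\approx v$.
\item Likewise $s(t)=s(u)s(v)$, not $s(u)$. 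The second extraction on $(t,u)$ only yields $t\equiv_{\G_{\mathsf W}}s(t)\,u\,e(t)$, which is $\approx u$ in bands. It just returns the relation you already had.
\end{itemize}

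So you end with $u\equiv t$ and $v\equiv t'$. Linking $t$ to $v$ needs something like $s(u)s(v)\equiv_{\G_{\mathsf W}}s(v)$. That is not a band identity in general (take $s(u)=xyz$, $s(v)=xyxz$), and nothing in your proposal derives it.

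The missing idea is to shrink the content by \emph{identifying} variables rather than doubling them.

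\begin{itemize}[leftmargin=2.1em]
\item Let $a$ be the last letter of the common first-occurrence word, choose $d\in D\setminus\{a\}$, and let $\alpha$ send $a\mapsto d$ while fixing all other variables.
\item In $P\alpha\approx Q\alpha$, which still holds in $\mathsf W$, the only occurrences supported in $E=D\setminus\{a\}$ are $u\alpha$ and $v\alpha$. Every other occurrence keeps a variable outside $D$.
\item Both $u\alpha$ and $v\alpha$ have content $E$, and their profiles still agree. This is because $\first(w\alpha)$ is obtained from $\first(w)\alpha$ by retaining first occurrences, and dually for $\last$.
\item The induction hypothesis gives $u\alpha\equiv_{\G_{\mathsf W}}v\alpha$.
\item Since $\alpha$ fixes $s(u)$ and $s(v)$, these are prefixes of $u\alpha$ and $v\alpha$ with content $E$. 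Right multiplication by $s(v)$ and Lemma~\ref{lem:deletion} give $s(u)s(v)\equiv_{\G_{\mathsf W}}s(v)$. Appending the remainder of $v$ gives $s(u)\,v\equiv_{\G_{\mathsf W}}v$.
\item Dually, identifying the first letter of the common last-occurrence word gives $v\,e(u)\equiv_{\G_{\mathsf W}}v$.
\end{itemize}

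Hence $u\equiv_{\G_{\mathsf W}}s(u)\,v\,e(u)\equiv_{\G_{\mathsf W}}v\,e(u)\equiv_{\G_{\mathsf W}}v$. This is exactly the comparison of the mismatched factors $s(u),s(v)$ you anticipated, but the identification substitution is what makes the induction hypothesis applicable.
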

\begin{proof}
We induct on $k=|D|$. If $k=1$, both words are positive powers of the same variable and agree in every band. Suppose $k\ge2$ and assume the result for smaller contents. Write
\[
 \first(u)=\first(v)=a_1\cdots a_k,\qquad
 \last(u)=\last(v)=b_1\cdots b_k.
\]
Set $L=s(u)$, $R=e(u)$, $S=s(v)$, and $F=e(v)$. Lemma~\ref{lem:single-extraction} gives
\begin{equation}\label{eq:uLvR}
 u\equiv_{\G_{\mathsf W}}LvR.
\end{equation}
We show that $Lv\equiv_{\G_{\mathsf W}}v$ and $vR\equiv_{\G_{\mathsf W}}v$.

Choose $d\in D\setminus\{a_k\}$ and let $\alpha$ replace $a_k$ by $d$, fixing all other variables, including those outside $D$. The transformed isolated words have common content $E=D\setminus\{a_k\}$. Every other occurrence still contains a variable outside $D$ and outside $E$. To obtain the first-occurrence word after this identification, apply $\alpha$ to the former first-occurrence word and retain the first occurrence of each resulting letter. The analogous procedure, retaining last occurrences, gives the last-occurrence word. Both procedures preserve the equalities in~\eqref{eq:same-profile}. Since $|E|=k-1$, the induction hypothesis applies and gives
\begin{equation}\label{eq:compressed-left}
 u\alpha\equiv_{\G_{\mathsf W}}v\alpha.
\end{equation}
The substitution fixes $L$ and $S$, both of content $E$. The word $u\alpha$ begins with $L$, the word $v\alpha$ begins with $S$, and both use only letters of $E$. Lemma~\ref{lem:deletion} gives
\[
 (u\alpha)S\approx LS,\qquad (v\alpha)S\approx S.
\]
Multiplying~\eqref{eq:compressed-left} on the right by $S$ within the congruence yields $LS\equiv_{\G_{\mathsf W}}S$. Write $v=Sv_1$, where $v_1$ is its actual nonempty remaining suffix, and append $v_1$. We obtain
\begin{equation}\label{eq:left-stability}
 Lv\equiv_{\G_{\mathsf W}}v.
\end{equation}

For the dual argument, identify $b_1$ with any other letter of $D$, fixing all outside variables; call this substitution $\beta$. The induction hypothesis gives $u\beta\equiv_{\G_{\mathsf W}}v\beta$. The substitution fixes $R,F$, which have content $D\setminus\{b_1\}$. Left multiplication by $F$ and Lemma~\ref{lem:deletion} give
\[
 F(u\beta)\approx FR,\qquad F(v\beta)\approx F.
\]
We obtain $FR\equiv_{\G_{\mathsf W}}F$. Prepend the actual nonempty prefix $v_0$ in $v=v_0F$ to get
\begin{equation}\label{eq:right-stability}
 vR\equiv_{\G_{\mathsf W}}v.
\end{equation}
Combining~\eqref{eq:uLvR}--\eqref{eq:right-stability} in the congruence gives
\[
 u\equiv_{\G_{\mathsf W}}LvR
   \equiv_{\G_{\mathsf W}}vR
   \equiv_{\G_{\mathsf W}}v.
\]
\end{proof}

The induction preserves isolation by identifying variables only within the isolated content. It derives a protected identity without cancelling the doubled part of the original polynomial.

\section{Four identities for occurrence-order alignment}\label{sec:alignment}

Let $\Pi$ be the set of the six permutations of $y,z,w$. Define
\begin{equation}\label{eq:guards}
 C_h=\sum_{\pi\in\Pi}2\pi yz,\qquad
 C_j=\sum_{\pi\in\Pi}2x\pi zyx.
\end{equation}
We call the words in these doubled sums \emph{guard words} and the sums \emph{guard polynomials}. They occur in the following two \emph{guarded identities}:
\begin{align}
 (H_\ell)\qquad yz+C_h&\approx zyz+C_h,\label{eq:Hl}\\
 (J_\ell)\qquad xyxzxyx+C_j&\approx xzxyx+C_j.\label{eq:Jl}
\end{align}
Reversing each multiplicative word gives the guarded identities $(H_r)$ and $(J_r)$. The four guarded identities involve at most four variables. Every guard polynomial in~\eqref{eq:guards} has exactly six displayed doubled summands.

\begin{lemma}\label{lem:extract-guards}
Let $\TT\in\mathsf W\subseteq\AA$, and suppose $\mathsf W\models P\approx Q$. For a common isolated content $D$, let $u,v$ be its occurrences on the two sides.
\begin{enumerate}[label=(\roman*),leftmargin=2.4em]
\item If $u,v$ have different first letters, then $\mathsf W\models(H_\ell)$.
\item If they have the same first letter but different first-occurrence words, then $\mathsf W\models(J_\ell)$.
\end{enumerate}
The corresponding assertions for last occurrences give $(H_r)$ and $(J_r)$.
\end{lemma}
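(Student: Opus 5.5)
We derive each guarded identity directly from $P\approx Q$ by a substitution followed by multiplication by fixed words. Lemma~\ref{lem:count} lets us use $\TT\in\mathsf W$: it shows that $u$ and $v$ have content exactly $D$ and are the only occurrences on their sides supported in $D$. Every other occurrence therefore contains a variable outside $D$. We send all variables outside $D$ to $2w$, where $w$ is fresh, so each such occurrence becomes a doubled word containing $w$. After multiplying by the fixed words, these doubled words should be absorbed by the guard polynomial. Adding the guard polynomial to both sides then leaves only the images of $u$ and $v$. Absorption is tested in the doubled image. By Lemma~\ref{lem:regular-double}, \eqref{eq:rbword} and Lemma~\ref{lem:absorption}, a doubled word is absorbed once its first- and last-occurrence words coincide with those of some guard word.

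For (i), let $u$ begin with $a$ and $v$ begin with $b\ne a$. Substitute $a\mapsto y$, every other letter of $D$ $\mapsto z$, and every outside variable $\mapsto 2w$. Then multiply both sides on the right by $yz$. Now $u\mapsto u'$ is a word over $\{y,z\}$ beginning with $y$ with full content, so in $B_{\{y,z\}}$ it is $yz$ or $yzy$, and $u'yz\approx yz$ holds in every band. Likewise $v'yz\approx zyz$. Every other occurrence becomes $2W'yz$ with $\ct(W')=\{y,z,w\}$. Its last-occurrence word is $wyz$, because the final $y,z$ come after every $w$. Its first-occurrence word is some $\pi\in\Pi$. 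The guard word $2\pi yz$ has the same first-occurrence word $\pi$ and last-occurrence word $wyz$. Hence $C_h$ absorbs the occurrence, and adding $C_h$ to both sides yields $(H_\ell)$.

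For (ii), write $\first(u)=a_1\cdots a_k$ and $\first(v)=a_1\cdots a_{j-1}b_j\cdots$, where $j\ge2$ is the first index with $a_j\ne b_j$. Substitute $a_j\mapsto y$, $b_j\mapsto z$, every other letter of $D$ $\mapsto x$, and every outside variable $\mapsto 2w$. The images now have first-occurrence words $xyz$ and $xzy$ respectively. Next multiply on the left by $x$ and on the right by $zyx$, or by a similar word. The aim is to fix the last-occurrence word as $zyx$ and turn the isolated images into $xyxzxyx$ and $xzxyx$ respectively. The other occurrences then become doubled words with first-occurrence word $x\pi$ and last-occurrence word $wzyx$. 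These match the guard words $2x\pi zyx$, so $C_j$ absorbs them.

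The main obstacle is that $u,v$ are arbitrary band words, not regular-band words. The reduction of the isolated images to exactly $xyxzxyx$ and $xzxyx$ must therefore use only band identities, in effect Lemma~\ref{lem:deletion}. I would write the image of $u$ as $p\,z\,q$, where $p$ is the prefix before the first $z$, with $\ct(p)=\{x,y\}$ and $p$ beginning with $x$. I would then choose the left and right multipliers so that Lemma~\ref{lem:deletion} collapses $p$ to $xyx$ and $q$ together with the right factor to $xyx$. If the naive multipliers $x$ and $zyx$ do not suffice, I would instead multiply on the left by $xyx$ and on the right by $xyx$ or $zxyx$. The guard words are designed so that this choice still matches the outside occurrences. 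The statements for last occurrences, $(H_r)$ and $(J_r)$, follow by applying the whole argument to reversed words.
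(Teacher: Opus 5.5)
Part (i) is correct and matches the paper's proof. Sending the outside variables to $2w$ rather than $w$ is a harmless shortcut: it doubles the other occurrences at once, where the paper invokes Lemma~\ref{lem:lifting}.

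Part (ii) has a genuine gap at exactly the obstacle you flag, and none of the multipliers you list removes it. With the letters of $D$ sent to the single letters $x,y,z$, write $m$ and $n$ for the left and right multipliers. Both isolated images have first-occurrence words beginning with $\first(m)$, while the targets have first-occurrence words $xyz$ and $xzy$. Hence $m$ may involve only $x$; your fallback $m=xyx$ already gives the image of $v$ the wrong first-occurrence word $xyz$. The factor of $m\,u'\,n$ before its first $z$ is then $mp$, and its last letter is the last letter of $p$, which no outside multiplier can change. Take $u=a_1a_jb_j$, so that $u'=xyz$ and $p=xy$. Then $x\,u'\,zyx\approx xyzyx$ and $xyx\,u'\,xyx\approx xyzxyx$ both have $s(\cdot)=xy$, whereas $s(xyxzxyx)=xyx$. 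Equal elements of a free band have equal such prefixes, and $xy\ne xyx$ there. So the identity you derive is not $(J_\ell)$. These words do agree in regular bands, but that is available only for doubled words (Lemma~\ref{lem:regular-double}), not for the undoubled words of $(J_\ell)$. Lemma~\ref{lem:deletion} cannot collapse $p$ either: inside $u'$ the factor $p$ is followed directly by $z$, so there is no right neighbour of content $\{x,y\}$ to sandwich it.

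The missing idea is to conjugate by $x$ inside the substitution itself. Keep your choice of letters, but send $a_j$ to $Y=xyx$, $b_j$ to $Z=xzx$, the other letters of $D$ to $x$, and the outside variables to $2xwx$. Then multiply on the left by $x$ and on the right by $ZY$. (The paper instead sends the common-prefix letters to $x$, the next letter of $\first(v)$ to $Z$, and all remaining letters of $D$ to $Y$; either choice works.) All values lie in the local band $xSx$, in which $x$ is an identity element. So each isolated image is a band word in the two letters $Y,Z$ containing both, and such a word is determined by its first and last letters. This gives $YZY\approx xyxzxyx$ for $u$ and $ZY\approx xzxyx$ for $v$. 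Every letter is now flanked by $x$, which is exactly the sandwiching your deletion argument lacked. The remaining occurrences have first-occurrence word $x\pi$ and last-occurrence word $wzyx$. They therefore match the guard words of $C_j$ as you intended, and adding $C_j$ yields $(J_\ell)$.
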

\begin{proof}
For (i), let $a$ be the first letter of $v$. Replace $a$ by $z$, every other letter of $D$ by $y$, and every outside letter by $w$. Multiply the identity on the right by $yz$. Both isolated occurrences have content $\{y,z\}$; one begins with $y$, the other with $z$. A word over two generators in a band is determined by its content and its first and last letters. Deleting consecutive repeated letters leaves alternating words, and those of length at least four reduce using $(yz)^2\approx yz$ and $(zy)^2\approx zy$. The two isolated occurrences become $yz$ and $zyz$.

Each other occurrence contains an outside letter and, after multiplication by $yz$, has content $\{y,z,w\}$. It is nonisolated: the isolated occurrence and this occurrence both have support in that content. Equation~\eqref{eq:fourtwo} shows that the doubled transformed polynomial absorbs the double of this occurrence. Lemma~\ref{lem:lifting} permits replacing the occurrence by its double. Its last-occurrence word is $wyz$. By~\eqref{eq:rbword}, its double is one of the six terms $2\pi yz$ in $C_h$. Adding $C_h$ to the transformed identity absorbs all these doubled terms on both sides and yields~\eqref{eq:Hl}.

For (ii), let $p_0$ be the longest common prefix of $\first(u)$ and $\first(v)$, and let $a$ be the next letter in $\first(v)$. Put $D_0=\ct(p_0)$. Replace letters of $D_0$ by $x$, the letter $a$ by $Z=xzx$, all other letters of $D$ by $Y=xyx$, and all outside letters by $W=xwx$. Multiply on the left by $x$ and on the right by $ZY$.

The values $x,Y,Z,W$ lie in the local band $xSx$, with identity element $x$. Here $x$ is a nonempty term, not a constant symbol. After we omit this local identity from the calculation, the two isolated occurrences are words over $Y,Z$; one begins with $Y$, the other with $Z$, and both contain both letters. Appending $ZY$ gives the values $YZY$ and $ZY$. In the original variables, these are the undoubled words in~\eqref{eq:Jl}.

Every remaining occurrence has full content $\{x,y,z,w\}$, first letter $x$, and last-occurrence word $wzyx$. We double it by applying Lemma~\ref{lem:lifting} to the isolated occurrence and this occurrence. Each doubled word is one of $2x\pi zyx$. Adding $C_j$ yields~\eqref{eq:Jl}. Reversal gives the dual assertions.
\end{proof}

\begin{lemma}\label{lem:licensed-insertion}
Let $\mathsf V\subseteq\IB$, let $R$ have full content $X$, and let $u,v$ be occurrences of $R$ with the same content $D\subsetneq X$. Choose a word $g$ with $\ct(g)=X$ and $\mathsf V\models R+g\approx R$.
\begin{enumerate}[label=(\roman*),leftmargin=2.4em]
\item If $a$ is the first letter of $v$, instantiate $(H_\ell)$ by $y=u$, $z=a$, $w=g$, and multiply it on the right by $u$. Its undoubled words reduce to $u$ and $au$. Each of its six guard words has content $X$ and is absorbed by $R$ in $\mathsf V$.
\item Suppose $\first(u),\first(v)$ have a common nonempty prefix $p_0$, and $a$ is the next letter of $\first(v)$ after $p_0$. Choose an actual prefix $p$ of $u$ with $\ct(p)=\ct(p_0)$ and $\first(p)=p_0$. Instantiate $(J_\ell)$ by $x=p$, $y=u$, $z=a$, $w=g$, and multiply on the right by $u$. Its undoubled words reduce to $u$ and $pau$. Each of its six guard words has content $X$ and is absorbed by $R$ in $\mathsf V$.
\end{enumerate}
In (ii), such a prefix $p$ exists whenever $p_0$ is a proper common prefix of the first-occurrence words. The dual assertions hold for right insertion.
\end{lemma}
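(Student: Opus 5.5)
The plan is to verify the two claims in each part separately. First, the undoubled words reduce to the stated words using only band identities, through Lemma~\ref{lem:deletion}. Second, the guard words are absorbed, which I will check with the criterion of Lemma~\ref{lem:licensing}, using Lemma~\ref{lem:admissible-concat} to build admissible words from pieces already known to be admissible. Throughout, $\ct(u)=\ct(v)=D$ and $a\in D$.

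For (i), the instantiated $(H_\ell)$ has undoubled words $ua$ and $aua$. After right multiplication by $u$ they become $uau$ and $auau$. Lemma~\ref{lem:deletion} gives $uau\approx u$, since $\ct(a)\subseteq\ct(u)$, and hence $auau=a(uau)\approx au$.

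Each guard word has the form $2\,\pi(u,a,g)\,u\,a\,u\approx 2\,\pi(u,a,g)\,u$. It has content $D\cup X=X$, because $\ct(g)=X$. Since $\mathsf V\models R+g\approx R$ and $\ct(R)=\ct(g)=X$, Lemma~\ref{lem:licensing} shows that $\first(g)$ is left admissible at level $\lambda$ and that the reversed $\last(g)$ is admissible at level $\mu$. The same holds for $\first(u)$ and $\last(u)$ by Lemma~\ref{lem:admissible-concat}, because $u$ is an occurrence of $R$. The one-letter word $a$ is left admissible because it is a prefix of $\first(v)$, and $v$ is an occurrence.

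The first-occurrence word of the guard is $\first$ of a concatenation of the blocks $u$, $a$, $g$ in the order $\pi$, followed by $u$. Iterating the concatenation clause of Lemma~\ref{lem:admissible-concat} makes it left admissible. On the right, the final factor $u$ contains every letter of $D$, including $a$. So the last-occurrence word equals $\last(wu)$, where $w$ is the product of the blocks $u$ and $g$ in their order under $\pi$, and the dual concatenation clause makes it right admissible. Lemma~\ref{lem:licensing} then gives absorption by $R$ in $\mathsf V$.

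For (ii), I first construct $p$. Let $p$ be the shortest prefix of $u$ whose content is $\ct(p_0)$. This prefix is nonempty. Because $p_0$ is a prefix of $\first(u)$, we get $\first(p)=p_0$. Because $p$ is a prefix of $u$, we have $pu\approx u$: writing $u=pu'$ gives $pu=ppu'\approx pu'=u$.

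The instantiated undoubled words, after right multiplication by $u$, are $pupapupu$ and $papupu$. Using $upu\approx u$ and $pu\approx u$, both consequences of Lemma~\ref{lem:deletion} and the prefix property, the first reduces to $upau$ and then to $u$, since $\ct(pa)\subseteq\ct(u)$. The second reduces to $pa(pu)\approx pau$.

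The guard words are $2\,p\,\pi(u,a,g)\,a\,u\,p\,u\approx 2\,p\,\pi(u,a,g)\,a\,u$, and they have content $X$. The right side is handled exactly as in (i): the final $u$ absorbs every occurrence of a letter of $D$, and the remaining pieces $u$ and $g$ are right admissible.

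The left side is the delicate step. The initial block $p$ contributes $p_0$, which is admissible as a prefix of $\first(u)$. At level $1$ the first letter is the first letter of $u$, so that case is immediate. At level $2$, the letter $a$ may first appear after some letters of $u$ or $g$ instead of directly after $p_0$, when $a$ is new. Its predecessor set then contains $\ct(p_0)$. Since $p_0a$ is a prefix of $\first(v)$, we have $\rho_R(a,\ct(p_0))=1$, and monotonicity in~\eqref{eq:rho} keeps $a$ admissible. Every other new letter comes from $\first(u)$ or $\first(g)$, so it is handled as in Lemma~\ref{lem:admissible-concat}.

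The dual assertions follow by reversal. The only real care is needed in the level-$2$ left-admissibility check in (ii), where $a$ need not immediately follow $p_0$.
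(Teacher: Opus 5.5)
Your proof is correct and follows the paper's argument. You reduce the undoubled words with Lemma~\ref{lem:deletion}, then show each guard word is absorbed via Lemma~\ref{lem:licensing}, building admissibility with Lemma~\ref{lem:admissible-concat} and using the common last-occurrence word $\last(g)$ with $D$ deleted followed by $\last(u)$. Your level-$2$ treatment of $a$ through $\rho_R(a,\ct(p_0))=1$ is an unpacked form of the paper's admissible prefix $\first(pa)=p_0a$, and taking $p$ to be the shortest prefix of $u$ with content $\ct(p_0)$ supplies the existence claim that the paper leaves implicit.
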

\begin{proof}
For (i), Lemma~\ref{lem:deletion} and band idempotence give
\[
 uau\approx u,\qquad auau\approx au.
\]
For (ii), the actual prefix relation gives $pu\approx u$. Substitution and right multiplication by $u$ give the two undoubled words
\[
 pupapupu\approx u,\qquad papupu\approx pau.
\]
For the first reduction, simplify $pu\approx u$ repeatedly and use $upau\approx u$ since $\ct(pa)\subseteq\ct(u)$. The second reduction follows by successive uses of $pu\approx u$.

To check absorption of the guard words, apply Lemma~\ref{lem:licensing}. In (i), their first-occurrence words come from the six orders of $u,a,g$ after deleting repetitions. In (ii), they come from
\begin{equation}\label{eq:sixprefixes}
 puag,\quad puga,\quad paug,\quad pagu,\quad pgua,\quad pgau.
\end{equation}
The first-occurrence words of $u$ and $v$ are admissible. Their prefixes $\first(pu)$ and $\first(pa)$ are admissible in (ii); in (i), the corresponding admissible prefixes are $\first(u)$ and $a$. Lemma~\ref{lem:admissible-concat} gives admissibility of $\first(pau)$ and permits completing each prefix with the admissible full word $\first(g)$. Placing $g$ earlier in the concatenation also preserves admissibility. This verifies all six words in~\eqref{eq:sixprefixes} and the six words in (i).

All six guard words in either case have the same last-occurrence word: delete the letters of $D$ from $\last(g)$ and append $\last(u)$. A full copy of $u$ follows every occurrence of $g$, while all other inserted factors have content in $D$. After reversal, retaining first occurrences in the concatenation $\rev(u)\rev(g)$ gives the corresponding first-occurrence word. The right-hand version of Lemma~\ref{lem:admissible-concat} proves its admissibility. The factor $g$ supplies full content. Lemma~\ref{lem:licensing} proves absorption of every guard word.
\end{proof}

Here $p$ must be an actual prefix of $u$. Its repetition-free first-occurrence word need not satisfy $pu\approx u$ in an arbitrary band.

\begin{lemma}\label{lem:selfguards}
Under the assumptions of Lemma~\ref{lem:licensed-insertion}, both $ugu$ and $vgv$ have content $X$ and are absorbed by $R$ in $\mathsf V$.
\end{lemma}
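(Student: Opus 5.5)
The plan is to apply Lemma~\ref{lem:licensing} directly to $ugu$ and $vgv$, in the same way the guard words were handled in the proof of Lemma~\ref{lem:licensed-insertion}. By symmetry it suffices to treat $ugu$; the argument for $vgv$ is identical with $v$ in place of $u$, since $v$ is also an occurrence of $R$.

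The content claim is immediate: $\ct(ugu)=\ct(u)\cup\ct(g)\cup\ct(u)=X$ because $\ct(g)=X$. Since $\ct(R)=X$ as well, Lemma~\ref{lem:licensing} applies. It reduces the absorption $\mathsf V\models R+ugu\approx R$ to two requirements. First, $\first(ugu)$ must be left admissible at level $\lambda$ for $R$. Second, $\rev(\last(ugu))$ must be left admissible at level $\mu$ for $\rev(R)$.

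For the left side, retaining first occurrences gives $\first(ugu)=\first(ug)$, because the trailing copy of $u$ adds no new letters. Lemma~\ref{lem:admissible-concat} says that $\first(u)$ is admissible at every level, since $u$ is an occurrence of $R$. The hypothesis $\mathsf V\models R+g\approx R$, read through Lemma~\ref{lem:licensing}, says that $\first(g)$ is admissible at level $\lambda$. The concatenation clause of Lemma~\ref{lem:admissible-concat} then gives admissibility of $\first(ug)$ at level $\lambda$.

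The right side is dual. We have $\last(ugu)=\last(gu)$. After reversal, $\rev(\last(gu))=\first(\rev(u)\rev(g))$. Here $\rev(u)$ is an occurrence of $\rev(R)$, and $\rev(\last(g))$ is admissible at level $\mu$ by the hypothesis on $g$. The right-hand version of Lemma~\ref{lem:admissible-concat} then gives admissibility at level $\mu$. The condition of Lemma~\ref{lem:licensing} is satisfied, so $R$ absorbs $ugu$ in $\mathsf V$.

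The only step needing care is the concatenation clause. It requires both factors to be admissible at the same level, and $\first(u)$ satisfies this because admissibility of occurrence words holds at every level. So I expect no genuine obstacle beyond bookkeeping of first- and last-occurrence words under reversal.
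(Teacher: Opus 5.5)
Your proposal is correct and follows the paper's proof. The paper likewise obtains $\first(ugu)$ by deleting repetitions from $\first(u)\first(g)$ and $\rev(\last(ugu))$ from $\first(\rev(u))\first(\rev(g))$, then applies Lemma~\ref{lem:admissible-concat} and Lemma~\ref{lem:licensing}, and handles $vgv$ by replacing $u$ with $v$; you additionally make explicit the bookkeeping the paper leaves implicit, namely that $\first(u)$ is admissible at every level because $u$ is an occurrence and that $\first(g)$ is admissible at level $\lambda$ by the necessity direction of Lemma~\ref{lem:licensing}.
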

\begin{proof}
Deleting repetitions from $\first(u)\first(g)$ gives the first-occurrence word of $ugu$. The same operation on $\first(\rev(u))\first(\rev(g))$ gives its reversed last-occurrence word. Apply Lemmas~\ref{lem:admissible-concat} and~\ref{lem:licensing}. Replacing $u$ by $v$ gives the proof for $vgv$.
\end{proof}

\section{Finite bases and the main theorem}\label{sec:mainproof}

For a word identity $\sigma=(s\approx t)$ write
\[
 \sigma^{\sharp}:\quad s+2s\omega s\approx t+2t\omega t,
\]
where $\omega$ is fresh. For a polynomial identity $\tau=(P\approx Q)$ write $2\tau$ for $2P\approx2Q$. These are abbreviations for identities in the original two-operation signature.

\begin{theorem}\label{thm:finitebasisA}
Let $\TT\in\mathsf W\subseteq\AA$. Let $\mathsf U$ be the band variety generated by the multiplicative reducts of members of $\mathsf W$, and put $\mathsf V=\mathsf W\cap\IB$. Choose finite relative bases
\begin{itemize}[leftmargin=2.1em]
\item $\Sigma_{\mathsf U}$ for $\mathsf U$ inside the variety of bands;
\item $\Sigma_{\mathsf V}$ for $\mathsf V$ inside $\IB$;
\item $\Sigma_{\G}$ for the band theory $\G_{\mathsf W}$.
\end{itemize}
Let $\Gamma_{\mathsf W}$ consist of those members of $\{H_\ell,J_\ell,H_r,J_r\}$ that hold in $\mathsf W$. A finite identity basis for $\mathsf W$ is
\begin{equation}\label{eq:finitebasis}
\begin{split}
 \sr\;\cup\;\{x^2\approx x,\ 3x\approx2x\}
 &\;\cup\;\Sigma_{\mathsf U}
 \;\cup\;\{2\tau:\tau\in\Sigma_{\mathsf V}\}\\
 &\;\cup\;\Gamma_{\mathsf W}
 \;\cup\;\{\sigma^{\sharp}:\sigma\in\Sigma_{\G}\}.
\end{split}
\end{equation}
\end{theorem}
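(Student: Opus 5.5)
Let $\mathsf Z$ be the variety defined by~\eqref{eq:finitebasis}. Every listed identity holds in $\mathsf W$: the words of $\Sigma_{\mathsf U}$ hold in the multiplicative reducts, $2\tau$ holds because $d(S)\in\mathsf V$ for $S\in\mathsf W$ (Lemma~\ref{lem:retractions}), $\Gamma_{\mathsf W}$ holds by definition, and $\sigma^\sharp$ holds by Definition~\ref{def:protected}. Hence $\mathsf W\subseteq\mathsf Z$. For the reverse inclusion, fix $P\approx Q$ in $\Id(\mathsf W)$ over a finite alphabet and derive it in $\mathsf Z$. Members of $\mathsf Z$ have doubled images in $\mathsf V$, since the identities $2\tau$ define $\mathsf V$ inside $\IB$ after applying $d$. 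I would split into cases by the number of occurrences.

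\emph{Single occurrences.} If $P$ is a single word, then $\nu_P(\ct(P))=1$. Since $\TT\in\mathsf W$, Lemma~\ref{lem:count} forces $Q$ to be a single word as well. Then $P\approx Q$ is a word identity of $\mathsf U$ and follows from $\Sigma_{\mathsf U}$.

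\emph{At least two occurrences.} By Proposition~\ref{prop:normal} applied with $\mathsf Z$, and using $\mathsf V=\mathsf W\cap\IB$ and $\TT\in\mathsf W$, both sides are $\mathsf Z$-equal to $C+\sum_{D\in\Iso}u_D$ and $C+\sum_D v_D$, with the same doubled part $C$ and the same isolated contents. It remains to replace each $u_D$ by $v_D$ inside the context $C+\cdots$, one content at a time.

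\emph{Replacing $u_D$ by $v_D$.} I would first align first- and last-occurrence words. Write $R=C+\sum u_D$, of full content $X$; choosing $q$ as the product of all occurrences shows it has full content. Pick $g\in\Dset$ of full content. If $\first(u_D)\ne\first(v_D)$, Lemma~\ref{lem:extract-guards} shows that $H_\ell$ or $J_\ell$ lies in $\Gamma_{\mathsf W}$. The instantiation of Lemma~\ref{lem:licensed-insertion} then rewrites $u_D$ to $au_D$ or $pau_D$ inside $R$: the six guard words are absorbed by $R$ in $\mathsf V$, hence absorbed in $\mathsf Z$ by Lemma~\ref{lem:lifting}, since nonisolated full-content occurrences exist. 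We add the guards, apply the guarded identity, and remove them again. Each step extends the common prefix of the first-occurrence words with that of $v_D$. Iterating, then applying the duals $H_r,J_r$, yields $u'_D$ with $\first(u'_D)=\first(v_D)$ and $\last(u'_D)=\last(v_D)$, still with content $D$. In $\mathsf W$ the identity $R\approx R'$ holds (the rewrites are valid there), so $P\approx Q$ transforms into an identity of $\mathsf W$ whose $D$-isolated occurrences $u'_D,v_D$ satisfy~\eqref{eq:same-profile}. Theorem~\ref{thm:extraction} gives $u'_D\equiv_{\G_{\mathsf W}}v_D$, which is derivable from $\Sigma_{\G}$ as a band-congruence consequence (Lemma~\ref{lem:G}). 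Lifting through $\sharp$ requires derivations of protected forms to compose; I would check that substitution, context multiplication, and transitivity each preserve the protected form in $\mathsf Z$, following the proof of Lemma~\ref{lem:G}. Finally, substitute $\omega\mapsto g$, or a suitable full-content word, and multiply by contexts so that $2u'_D\omega u'_D$ becomes a word absorbed by $C$ in $\mathsf V$, using Lemma~\ref{lem:selfguards}. The protected identity then turns $C+u'_D$ into $C+v_D$.

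\textbf{Main obstacle.} The main difficulty is this last step. The guard words and protected summands must be absorbed in $\mathsf Z$, not merely in $\mathsf W$, and the intermediate rewrites must keep the other isolated occurrences isolated so that later contents can still be handled. A secondary difficulty is the case $D=X$. There $g$ cannot add new letters, so I would instead use the product of $u_D$ with another occurrence, or handle this case directly via Lemma~\ref{lem:selfguards}.
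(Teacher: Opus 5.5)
Your proposal follows the paper's proof essentially step for step. It uses the same variety $\mathsf Z$, $\Sigma_{\mathsf U}$ for single words, the common doubled part $C$ from Proposition~\ref{prop:normal}, and guard-driven alignment via Lemmas~\ref{lem:extract-guards} and~\ref{lem:licensed-insertion}; it finishes with Theorem~\ref{thm:extraction}, $\G_{\mathsf W}\subseteq\G_{\mathsf Z}$ (which is just Lemma~\ref{lem:G} applied to $\mathsf Z$), and the substitution $\omega\mapsto g$ with Lemma~\ref{lem:selfguards}.

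Your two loose ends close quickly. First, the case $D=X$ never occurs, because an isolated content equal to $X$ would force $\nu_P(X)=1$. Second, removing the guards after a rewrite requires absorption by the new polynomial, which Lemma~\ref{lem:lifting} applied to the old $R$ does not give you. The simplest fix is the paper's: every full-content word $h$ absorbed in $\mathsf V$ already lies in $\Dset$. Then $C+2h\approx C$ holds in $\mathsf Z$ by $4x\approx2x$ alone, so all absorption takes place inside the fixed $C$.
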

\begin{proof}
The finite bases exist by the finite basis theorems for bands~\cite{GP} and $\IB$~\cite{GPZ,Pastijn}, and by Lemma~\ref{lem:G}. Every identity in~\eqref{eq:finitebasis} holds in $\mathsf W$. Let $\mathsf Z$ be the variety they define. These identities give $\mathsf W\subseteq\mathsf Z\subseteq\AA$. To prove the reverse inclusion, we derive every identity of $\mathsf W$ in $\mathsf Z$.

Every doubled image of a member of $\mathsf Z$ belongs to $\mathsf V$. Since $d$ is a homomorphism, evaluating $P$ at doubled elements gives $2P$ at the original elements. The identities $2\Sigma_{\mathsf V}$ imply $P\approx Q$ on each doubled image whenever $2P\approx2Q$ occurs in the basis. The protected identities in the basis give
\begin{equation}\label{eq:Gcontain}
 \G_{\mathsf W}\subseteq\G_{\mathsf Z}.
\end{equation}
By Lemma~\ref{lem:G}, $\G_{\mathsf Z}$ is a fully invariant band theory. It contains $\Sigma_{\G}$ because their protected versions occur in~\eqref{eq:finitebasis}, and contains the band theory they generate.

Assume $\mathsf W\models P\approx Q$ and expand both terms as polynomials. Put $X=\ct(P)\cup\ct(Q)$. By Lemma~\ref{lem:count}, their count functions and isolated contents agree. If $\nu_P(X)=1$, both sides are single words, and their identity follows from $\Sigma_{\mathsf U}$. It remains to consider polynomials with at least two occurrences on each side.

\emph{The common doubled part.}
Let
\[
 \Dset=\Dset_{\mathsf V}(P)=\Dset_{\mathsf V}(Q),\qquad
 C=\sum_{q\in\Dset}2q.
\]
By Proposition~\ref{prop:normal}, the following identities hold in $\mathsf Z$:
\begin{equation}\label{eq:common-normal}
 P\approx C+\sum_{D\in\Iso}u_D,\qquad
 Q\approx C+\sum_{D\in\Iso}v_D,
\end{equation}
where $\Iso$ is the common set of isolated contents. The polynomial $C$ is nonempty. Every isolated content is proper in $X$, since otherwise all occurrences of its side would be supported in it.

Keep $C$ fixed. Starting from the first expression in~\eqref{eq:common-normal}, apply only transformations described below that we have already proved in $\mathsf Z$, and call the resulting polynomial $P_0$. Let $Q_0$ remain the second expression. The identity $P_0\approx Q_0$ holds in $\mathsf W$, since $\mathsf W\subseteq\mathsf Z$ and the original identity holds in $\mathsf W$. Put $R=P_0+Q_0$. In $\mathsf V$ we have $R\approx P\approx Q$. If a word $h$ has content $X$ and $\mathsf V\models R+h\approx R$, then
\begin{equation}\label{eq:available}
 h\in\Dset,\qquad C+2h\approx C.
\end{equation}
For the first statement, use the chosen free-band representative: $\nu_P(X)=2$, and $\mathsf V\models R\approx P$ gives the absorption condition in~\eqref{eq:Dset}. For the second, let $\bar h$ be the representative of $h$ in $B_X$. The sum $C$ contains $2\bar h$, and $h\approx\bar h$ holds in every band. Adding $2h$ changes $2\bar h$ to $4\bar h$, which equals $2\bar h$. This coefficient reduction proves $C+2h\approx C$ in $\mathsf Z$ using its semiring and band identities. The absorption test in $\mathsf V$ identifies a summand already in $C$; this argument does not assume $P_0\approx Q_0$ in $\mathsf Z$.

To choose $g$, multiply all word occurrences of $R$. The resulting word has content $X$, and $R$ absorbs it in $\mathsf V$ because it absorbs every factor and their product. The content remains $X$ at every stage: $C$ contains the word obtained initially by multiplying all occurrences of $P$ and $Q$, using $\mathsf V\models P\approx Q$.

\emph{Alignment of the first-occurrence words.}
Fix a common isolated content $D$ and write $u,v$ for its current occurrences in $P_0,Q_0$. Suppose their first-occurrence words differ. If their first letters differ, Lemma~\ref{lem:extract-guards} shows that $H_\ell\in\Gamma_{\mathsf W}$. Apply it as in Lemma~\ref{lem:licensed-insertion}(i), with $a$ the first letter of $v$. Its undoubled words are $u,au$. The polynomial $R$ absorbs each instantiated guard word, and each has content $X$. By~\eqref{eq:available}, the instantiated guard polynomial, say $C'$, satisfies $C+C'\approx C$. The guarded identity gives, in $\mathsf Z$,
\[
 C+u\approx C+C'+u\approx C+C'+au\approx C+au.
\]
We add the other isolated summands to both sides.

If the first letters agree, let $p_0$ be the longest common proper prefix of $\first(u),\first(v)$, and let $a$ be the next target letter from $\first(v)$. Lemma~\ref{lem:extract-guards} gives $J_\ell\in\Gamma_{\mathsf W}$. Choose an actual prefix $p$ as in Lemma~\ref{lem:licensed-insertion}(ii). That lemma and~\eqref{eq:available} allow the same calculation to replace $C+u$ by $C+pau$ in $\mathsf Z$.

Each replacement lengthens the common first-occurrence prefix with $v$. It preserves content $D$ and the last-occurrence word, since the former word $u$ remains as a suffix. No occurrence in the fixed doubled part is supported in an isolated $D$: a word $q\in\Dset$ with $\ct(q)\subseteq D$ would give $\nu_P(D)=2$. The replacement preserves isolation. Since the current identity $P_0\approx Q_0$ still holds in $\mathsf W$, every guarded identity needed at the next step belongs to the selected set $\Gamma_{\mathsf W}$. After finitely many steps the first-occurrence words agree.

\emph{Alignment of the last-occurrence words.}
Apply the reverse argument, using $H_r,J_r$. These replacements leave the whole current word as a prefix and preserve the first-occurrence word just obtained. After finitely many steps the resulting word $u^*$ satisfies
\begin{equation}\label{eq:aligned}
 \first(u^*)=\first(v),\qquad\last(u^*)=\last(v).
\end{equation}
These transformations hold in $\mathsf Z$ and retain the same $C$.

\emph{Replacement of the remaining word.}
The current identity $P_0\approx Q_0$ holds in $\mathsf W$, and its occurrences $u^*,v$ remain isolated with content $D$. Theorem~\ref{thm:extraction} and~\eqref{eq:aligned} imply $u^*\equiv_{\G_{\mathsf W}}v$. By~\eqref{eq:Gcontain}, the protected identity
\[
 u^*+2u^*\omega u^*\approx v+2v\omega v
\]
holds in $\mathsf Z$. Choose an absorbed full-content word $g$ for the current $R$. Lemma~\ref{lem:selfguards} and~\eqref{eq:available} show that $C$ absorbs both $2u^*gu^*$ and $2vgv$. Substituting $\omega=g$ and adding $C$ yields
\[
 C+u^*\approx C+u^*+2u^*gu^*
       \approx C+v+2vgv\approx C+v.
\]
This replaces the isolated occurrence without changing $C$ or the other isolated contents.

Repeating this replacement for each of the finitely many $D\in\Iso$ makes the expressions in~\eqref{eq:common-normal} identical; if $\Iso$ is empty, both already equal $C$. The completed derivation proves $\mathsf Z\models P\approx Q$, giving $\Id(\mathsf W)\subseteq\Id(\mathsf Z)$ and $\mathsf Z=\mathsf W$.
\end{proof}

\begin{corollary}\label{cor:A}
The variety $\AA$ is hereditarily finitely based.
\end{corollary}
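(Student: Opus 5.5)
Let $\mathsf W\subseteq\AA$ be an arbitrary subvariety. The plan is a case split on whether $\mathsf W$ contains $\TT$, since the two cases are already covered by results proved above.

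First suppose $\TT\notin\mathsf W$. Then Proposition~\ref{prop:forcedT} applies directly: every member of $\mathsf W\setminus\JJ$ would contain a copy of $\TT$, so $\mathsf W\subseteq\JJ$. Proposition~\ref{prop:j} then shows that $\mathsf W$ is finitely based, since $\JJ$ is hereditarily finitely based. For definiteness, one may also add the single identity $2(x+y)\approx x+y$ to a finite basis of $\mathsf W$ viewed as a subvariety of $\JJ$; this yields a basis relative to $\sr$.

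Now suppose $\TT\in\mathsf W$. Theorem~\ref{thm:finitebasisA} supplies the explicit set~\eqref{eq:finitebasis}. It remains to check that this set is finite. The set $\sr$ and the two additive and multiplicative axioms are finite. The bases $\Sigma_{\mathsf U}$ and $\Sigma_{\G}$ are finite by the finite basis theorem for band varieties~\cite{GP}, using Lemma~\ref{lem:G} to see that $\G_{\mathsf W}$ is the identity theory of a band variety. The basis $\Sigma_{\mathsf V}$ is finite by the finite basis theorem for $\IB$ (statement (i) of Section~\ref{sec:prelim}). Finally, $\Gamma_{\mathsf W}$ has at most four members. The operations $\tau\mapsto2\tau$ and $\sigma\mapsto\sigma^\sharp$ preserve finiteness, so $\mathsf W$ is finitely based.

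Combining the two cases, every subvariety of $\AA$ is finitely based. No new obstacle arises in this argument; all of the substantive work is contained in Theorem~\ref{thm:finitebasisA}. The only point needing attention is that Proposition~\ref{prop:forcedT} really does cover every subvariety omitting $\TT$. It does, because containing $\TT$ as a subsemiring of some member already puts $\TT$ in $\mathsf W$.
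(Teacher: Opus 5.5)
Your proof is correct and follows the paper's own argument: split on whether $\TT\in\mathsf W$, using Proposition~\ref{prop:forcedT} and closure under subalgebras to get $\mathsf W\subseteq\JJ$ and then Proposition~\ref{prop:j} in one case, and Theorem~\ref{thm:finitebasisA} in the other. Your side remark about adding $2(x+y)\approx x+y$ is unnecessary, since the basis built in Lemma~\ref{lem:compact} already includes a finite basis of $\JJ$; as worded it is also slightly off, because a basis relative to $\JJ$ would need at least $x^2\approx x$ as well to become a basis relative to $\sr$.
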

\begin{proof}
A subvariety containing $\TT$ has the finite basis in Theorem~\ref{thm:finitebasisA}. A subvariety not containing $\TT$ is contained in $\JJ$ by Proposition~\ref{prop:forcedT}, and is finitely based by Proposition~\ref{prop:j}.
\end{proof}

\begin{proof}[Proof of Theorem~\ref{thm:main}]
The varieties $\RR$ and $\AA$ are hereditarily finitely based by Proposition~\ref{prop:r} and Corollary~\ref{cor:A}. One identity defines each relative to $\MI$. Proposition~\ref{prop:decompose} gives the required decomposition for every subvariety of $\MI$. Lemma~\ref{lem:compact} proves that $\MI$ is hereditarily finitely based. Corollary~\ref{cor:algebras} follows by applying this conclusion to $\Var(S)$.
\end{proof}

An algebra generating a locally finite variety is called \emph{inherently nonfinitely based} if it belongs to no finitely based locally finite variety of the same signature. The next corollary follows from the local finiteness of $\MI$ in Lemma~\ref{lem:localfinite}, independently of Theorem~\ref{thm:main}.

\begin{corollary}\label{cor:infb}
No multiplicatively idempotent semiring is inherently nonfinitely based in the constant-free signature $\{+,\cdot\}$.
\end{corollary}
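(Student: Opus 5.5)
The plan is to combine the local finiteness of $\MI$ (Lemma~\ref{lem:localfinite}) with Theorem~\ref{thm:main}, using the definition of inherent nonfinite basability directly. Fix $S\in\MI$. For $S$ to be inherently nonfinitely based, every locally finite variety of signature $\{+,\cdot\}$ containing $S$ would have to be nonfinitely based. It therefore suffices to exhibit one finitely based, locally finite variety that contains $S$.

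The candidate is $\MI$ itself. It is finitely based, being defined by the finite set $\sr\cup\{x^2\approx x\}$. It is locally finite by Lemma~\ref{lem:localfinite}, whose proof bounds the number of polynomial representatives on $n$ variables by $4^{|B_X|}-1$, with $B_X$ finite. Since $S\in\MI$, this single witness already refutes inherent nonfinite basability. Theorem~\ref{thm:main} is not needed, which matches the remark that the corollary holds independently of it.

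For completeness, one can also use $\Var(S)$ as the witness. It is a subvariety of a locally finite variety, hence locally finite, and it is finitely based by Theorem~\ref{thm:main}.

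The only point requiring care is the convention that $\Var(S)$ should be locally finite, so that the notion of inherent nonfinite basability applies to $S$. This is immediate, because subvarieties of locally finite varieties are locally finite. I see no real obstacle.
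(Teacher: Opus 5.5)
Your proof is correct and is essentially the paper's argument. The paper also takes $\MI$ itself as the witness: it is finitely based by its defining axioms and locally finite by Lemma~\ref{lem:localfinite}, so no member is inherently nonfinitely based. Your alternative witness $\Var(S)$, via Theorem~\ref{thm:main}, is also valid but, as you note, unnecessary.
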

\begin{proof}
The defining axioms give a finite basis for $\MI$, which is locally finite by Lemma~\ref{lem:localfinite}. Every multiplicatively idempotent semiring belongs to a finitely based locally finite variety of the same signature.
\end{proof}

\begin{remark}
Theorem~\ref{thm:main} proves that every subvariety of $\MI$ has a finite identity basis. The construction in~\eqref{eq:finitebasis} uses bases for the multiplicative band theory, the doubled image and the protected band theory, together with a subset of four fixed guarded identities. The auxiliary bases depend on the chosen subvariety. The proof gives no uniform bound on the number of variables, does not supply individual bases for every subvariety, and does not describe the full subvariety lattice of $\MI$.
\end{remark}
\section*{Declaration of AI use}
The authors used OpenAI ChatGPT and Codex for exploratory mathematical checks, manuscript
organization, and language editing. They reviewed and edited the resulting content and take
full responsibility for this article.

\end{document}